\DeclareMathOperator{\diag}{diag}
\newcommand{\nedelec}{N\'ed\'elec}
\title{On the convergence of monolithic multigrid for implicit Runge-Kutta time stepping of finite element problems}
\author{
  Robert C.~Kirby\thanks{Department of Mathematics, Baylor
    University; 1410 S.~4$^\text{th}$ St.; Waco, Texas 76706; Email: robert\_kirby@baylor.edu.  Supported by NSF 1909176.}}
\begin{document}

\maketitle

\begin{abstract}
  Finite element discretization of time dependent problems also require effective time-stepping schemes.
  While implicit Runge-Kutta methods provide favorable accuracy and stability problems, they give rise to large and complicated systems of equations to solve for each time step.
  These algebraic systems couple all Runge-Kutta stages together, giving a much larger system than for single-stage methods.
  We consider an approach to these systems based on \emph{monolithic} smoothing.  If stage-coupled smoothers possess a certain kind of structure, then the question of convergence of a two-grid or multi-grid iteration reduces to convergence of a related strategy for a single-stage system with a complex-valued time step.  In addition to providing a general theoretical approach to the convergence of monolithic multigrid methods, several numerical examples are given to illustrate the theory show how higher-order Runge-Kutta methods can be made effective in practice.
\end{abstract}

\begin{keywords}
Finite element method, Runge-Kutta method, preconditioning, multigrid
\end{keywords}
\begin{AMS}
  65F08, 65M22, 65M55
\end{AMS}

\section{Introduction}
Finite element methods (FEM) offer a wide range of robust and stable stable spatial discretizations of partial differential equations (PDE) that can yield high orders of accuracy and preserve underlying mathematical structure.  
Krylov methods, preconditioned by multigrid or other techniques, lead to highly efficient resolution of the underlying algebraic systems.
However, combining the high-order spatial discretization with comparable accuracy for evolution equations is a far less developed field.

\emph{Runge-Kutta} methods~\cite{butcher1996history,wanner1996solving} comprise a vast array of different time-stepping schemes including explicit and implicit methods.  Unlike multistep methods, Runge-Kutta schemes do not suffer from the famous Dahlquist barrier~\cite{dahlquist1963special} limiting A-stable methods to second order accuracy.
Certain families of methods also enforce critical stability properties related to conservation or dissipation.  
Many families of fully implicit collocation-type methods were studied theoretically in the past~\cite{butcher1964implicit}, but were deemed impractical owing to the large, stage-coupled algebraic systems required at each time step.
While diagonally implicit methods (DIRKs) such as~\cite{alexander1977diagonally} avoid the stage-coupled systems, most DIRKs have \emph{stage order} of only one or two and hence lose accuracy for the highly stiff systems generated by many time-dependent PDE.

The theoretical promise and higher stage accuracy of fully implicit methods remains tantalizing, and certain progress on efficient solvers has sparked new interest.
A common theme for such methods is to find some transformation or approximation of the stage-coupled system so that one can repurpose an effective technique for a single-stage method.
Early work in this direction, such as a block diagonal or triangular approximation to the system matrix, succeeds at reusing a single-stage solver, but the resulting preconditioners degrade with increasing number of Runge-Kutta stages~\cite{mardal2007order,staff2006preconditioning}.  
Rana \emph{et al} propose a somewhat different strategy~\cite{masud2021new}.  
Here, a triangular approximation of the Butcher matrix leads to a block triangular approximation of the stage-coupled system.
This approach not only enables re-use of effective single-stage solvers, but certain choices of the approximation to the Butcher matrix give excellent algorithmic scaling with respect to the number of RK stages.
Similar techniques have been used by Southworth \emph{et al} in~\cite{southworth2022fast1,southworth2022fast2} for the Navier-Stokes equations, and it is seen that good preconditioning can make fully implicit methods very competitive.

Here, we take a fundamentally different approach.
Rather than manipulating the overall system into a form where single-stage methods can be ably utilized, we propose multigrid smoothers by which the system can be solved \emph{monolithically}.
This approach, first suggested in~\cite{farrell2021irksome}, has yielded excellent empirical results for incompressible flow and magnetohydrodynamics~\cite{abu2022monolithic}, but the theory for such methods is quite sparse.
Early work in~\cite{vanlent2005} for finite difference spatial discretizations proposed and analyzed a block Jacobi smoother combining degrees of freedom for all the implicit stages at each grid point.
This approach was generalized to problems in $H(\mathrm{curl})$ in~\cite{TBoonen_etal_2009a}, where such a pointwise smoother was combined with an algebraic multigrid technique.

We give a significant extension of this approach to smoothing in the context of a family of geometric multigrid algorithms.
We introduce the concept of \emph{monolithicity}, in which the stage-coupled system decomposes to a set of independent systems for certain characteristic stages.
If a preconditioner/smoother for the coupled system respects this structure, then the multigrid algorithm similarly decomposes into characteristic stages.
The stage-coupled or monolithic multigrid algorithm converges if the underlying single-stage multigrid algorithm does.
While the decomposition yields complex-valued systems, this is only used for theoretical purposes -- real-valued systems are solved using only real arithmetic.
Consequently, our analysis provides a very general framework for extending single-stage multigrid to monolithic methods.

In Section~\ref{sec:setting}, we describe a family of evolution equations and Runge-Kutta time-stepping for their resulting Galerkin spatial discretizations. 
Then, we describe a general family of two-grid methods in Section~\ref{sec:2grid}.
We prove convergence of two-grid methods in terms of the two-grid convergence for the characteristic stages under assumptions on the smoother.
These assumptions seem somewhat abstract, but we also show that wide classes of single-stage smoothers can be adapted.
In particular, if the single-stage smoother can be derived in an additive Schwarz framework, then an analogous smoother can be constructed for the multi-stage case.  This framework includes the coupled smoothers in~\cite{TBoonen_etal_2009a,vanlent2005}.
The analysis of two-grid methods carries over immediately to a wide class of multigrid methods including V- and W-cycles, and we describe this briefly in Section~\ref{sec:mg}.
Finally, we give several numerical examples using monolithic multigrid as a preconditioner for GMRES in some model problems in Section~\ref{sec:apps} and conclusions in Section~\ref{sec:conc}.

\section{Problem setting}
\label{sec:setting}
Let $V$ and $W$ be Hilbert spaces with $V$ compactly embedded in $W$ and $T>0$ a real number.
We consider the abstract variational evolution equation of finding $u:(0, T] \rightarrow V$ such that 
  \begin{equation}
    \label{eq:theeq}
    m(u_t, v) + a(u, v) = F(t; v)
  \end{equation}
  for all $v \in V$, starting from some initial condition $u(0) = u_0 \in V$.
Here, $m(\cdot, \cdot)$ typically represents the $W$ inner product, but other choices are possible.  For example, letting $m(u_t, v)$ take the $W$-inner product of the projections into some subspace of $V$ allows us to consider differential-algebraic systems such as the time-dependent Stokes equations in the same framework.  Similarly, if $m$ takes derivatives of its arguments, then one can obtain Sobolev-type equations.
The bilinear form $a$ is bounded on $V \times V$, and $F:[0, T] \rightarrow V^\prime$.  We make no particular analytic assumptions such as coercivity at this point, other than to assume well-posedness of~\eqref{eq:theeq} and Galerkin approximations thereof.

To this end, we consider a finite-dimensional subspace $V_h \subset V$ equipped with a basis $\{ \phi_i \}_{i=1}^N$.  In the usual way, this leads to a discrete evolution equation, seeking $u_{h, t}: (0,T] \rightarrow V_h$ such that
\begin{equation}
  \label{eq:thediscreteeq}
  m(u_{h,t}, v_h) + a(u_h, v_h) = F(t; v_h)
\end{equation}
for all $v_h \in V_h$, starting from some $u_{h,0}$ suitably approximating the initial condition $u_0$ in $V_h$.

Before proceeding, we note at least two significant restrictions of our presentation, although generalizations are certainly possible.
First, we are working with conforming finite element methods, which admit naturally nested multigrid algorithms.  This allows us to focus on the stage-coupled structure.  Our convergence theory will show that single-stage multigrid convergence implies convergence for a kind of stage-coupled multigrid without making much use of the particulars of inter-grid transfer, so we expect these results to be readily applicable to more general multigrid settings.
Second, our formulation and analysis focuses on linear problems.
Nonlinear problems give rise to Jacobians with a similar, but more general, structure.  We also expect our convergence theory to hold provided that singe-stage multigrid works somehow uniformly over the underlying single-stage Jacobians.
Work in~\cite{abu2022monolithic} gives strong empirical support for this conjecture.

The discrete evolution equation~\eqref{eq:thediscreteeq} is equivalent to the system of ordinary differential (or differential algebraic) equations
\begin{equation}
  \label{eq:ODE}
  \mathrm{M} \mathrm{u}_t + \mathrm{K} \mathrm{u} = \mathrm{F},
\end{equation}
where
\begin{equation}
  \begin{split}
    \mathrm{M}_{ij} & = m\left( \phi_j, \phi_i \right), \\
    \mathrm{K}_{ij} & = a\left( \phi_j , \phi_i \right), \\
    \mathrm{F}(t)_i & = F(t)(\phi_i).
  \end{split}
\end{equation}

Now, we approximate the evolution of~\eqref{eq:ODE} by a Runge-Kutta method.
We partition $[0, T]$ into $N_t$ time steps of size $\Delta t = \tfrac{T}{N_t}$ and put $t^n = n \Delta t$.  Uniform time steps are purely a notational convenience and give no actual restriction for Runge-Kutta methods.
We seek approximations $\mathrm{u}^n \approx \mathrm{u}(t^n)$ to the ODE system, or equivalently, $u_h^n \approx u_h(t^n)$.

Runge-Kutta methods update the solution in terms of several \emph{stage variables}.  For an $s$-stage Runge-Kutta method starting from initial condition $\mathrm{u}_0$, the method is given by
\begin{equation}
  \label{eq:rkupdate}
  \mathrm{u}^{n+1} = \mathrm{u}^n + \Delta t \sum_{i=1}^s \mathrm{b}_i \mathrm{k}^i,
\end{equation}
where the stage vectors $\mathrm{k}^i \in \mathbb{R}^N$ collectively satisfy the algebraic system
\begin{equation}
  \label{eq:stagealg}
  \mathrm{M} \mathrm{k}^i + \Delta t \sum_{j=1}^s \mathrm{A}_{ij} \mathrm{k}^j = \mathrm{F}_i(t^n + \mathrm{c}_i \Delta t),  \ \ \ 1 \leq i \leq s.
\end{equation}
The numbers contained in $\mathrm{b}$, $\mathrm{c}$, and $\mathrm{A}$ are frequently arranged in a \emph{Butcher tableau}
\begin{equation}
  \label{eq:bt}
  \begin{array}{c|c}
    \mathrm{c} & \mathrm{A} \\ \hline
    & \mathrm{b}
  \end{array}.
\end{equation}
The structure of the algorithm is independent of the particular numerical values, but the stability and accuracy properties of the method depends strongly on them.

For general choices of $\mathrm{A}$, the algebraic system couples together all of the unknowns for all of the stages, resulting in a linear system that is $(sN) \times (sN)$ rather than just $N \times N$ for a single-stage method.
This is the practical tradeoff one makes for the favorable theoretical properties of fully implicit Runge-Kutta methods, and developing efficient solvers for this coupled system
 is the goal of this work.

We write the unknown stage variables in a single vector $\mathrm{k} \in \mathbb{R}^{sN}$ as
\begin{equation}
  \label{eq:stackk}
\mathrm{k} = \begin{bmatrix} \mathrm{k}^1 \\ \mathrm{k}^2 \\ \vdots \\ \mathrm{k}^s,
\end{bmatrix}
\end{equation}
and the forcing data for each stage in a vector $\mathrm{f}$ by
\[
\mathrm{f} = \begin{bmatrix} \mathrm{F}(t^n+\mathrm{c}_1\Delta t) \\
  \mathrm{F}(t^n+\mathrm{c}_2 \Delta t) \\
  \vdots \\
  \mathrm{F}(t^n+\mathrm{c}_s \Delta t)
  \end{bmatrix}.
\]
We can write out the linear system~\eqref{eq:stagealg} 
\begin{equation}
  \label{eq:stagealgkron}
  \left( \mathrm{I} \otimes \mathrm{M} + \Delta t \mathrm{A} \otimes \mathrm{K} \right) \mathrm{k} = \mathrm{f}
\end{equation}
where the identity matrix $\mathrm{I} \in \mathbb{R}^{s \times s}$ and $\otimes$ denotes the standard Kronecker product.
We note that even if $\mathrm{K}$ is symmetric, the coupled system will not be unless the Butcher matrix $\mathrm{A}$ is (which is quite rare).

This linear system~\eqref{eq:stagealg} is equivalent to a variational problem on a larger space.  We let $\mathbf{V}_h^s = \Pi_{i=1}^s V_h$ be the $s$-way Cartesian product of the finite-dimensional space $V_h$.  We seek $(k_{h, 1}, k_{h, 2}, \dots k_{h, s}) \in \mathbf{V}_h^s$ such that
\begin{equation}
  \left( k_{h,i}, v_{h, i} \right)
  + \Delta t \sum_{j=1}^s \mathrm{A}_{ij} a\left(k_{h, j}, v_{h, i} \right)
  = F\left(t+\mathrm{c}_i \Delta t; v_{h, i}\right),
\end{equation}

Just as $\mathbb{R}^N$ and $V_h$ are isomorphic through the identification of coefficients of the basis functions with vectors, so are
$\mathbb{R}^{sN}$ and $\mathbf{V}_h^s$.  To fix ideas, a member of $\mathbf{V}_h^ds$ consists of $s$ members of $V_h$, we can store all coefficients of the first function, followed by the second, and so on, just as we did in~\eqref{eq:stackk}.
Alternatively, one could store coefficients of each $\phi_i$ for all $s$ stages consecutively.  Such choices can impact performance, but we do not dwell on them in this work.

Following Butcher~\cite{butcher1976implementation}, when the matrix $A$ is invertible (which it is for most of our fully implicit families of interest), one can rewrite~\eqref{eq:stagealgkron} by multiplying through by $A^{-1} \otimes I$:
\begin{equation}
  \label{eq:stagealgkronbutch}
\left( A^{-1} \otimes M + \Delta t I \otimes K \right) \mathrm{k}
= (A^{-1} \otimes I) \mathrm{f} \equiv \tilde{\mathrm{f}},
\end{equation}
which has the advantage of making the typically stiff part of the matrix block diagonal.

Going forward, we define the matrix $\mathrm{B}$ to be
\begin{equation}
  \mathrm{B} = \left( \mathrm{I} \otimes \mathrm{M} + \Delta t \mathrm{A} \otimes \mathrm{K} \right).
\end{equation}
We will sometimes label $\mathrm{B}$ with subscripts indicating an approximating space in the multigrid hierarchy.  We also define a single-stage method by the pencil
\begin{equation}
  B_{z} = M + z K,
\end{equation}
where in our theory, $z$ may take on complex values.  This is only needed in our analysis and need not be actually computed.  We will include an additional subscipt such $B_{h,z}$ to distinguish between the single-stage methods on various spaces as needed.

\section{Monolithic two-grid methods}
\label{sec:2grid}
\subsection{Method formulation}
For a nonsingular system 
\begin{equation}
\label{eq:Cxb}
  C x = b
\end{equation}
representing a well-posed variational problem on $V_h$, one can define a simple iterative method as follows.  For some $W$ that somehow approximates $C$ but is simpler to invert, one take some initial $x^{(0)}$ and performs the iteration
\begin{equation}
  \begin{split}
    x^{(i+1)} & = x^{(i)} - W^{-1} \left( C x^{(i)} - b \right) \\
    & = \left( I - W^{-1} C \right) x^{(i)} + W^{-1} b \\
    & \equiv S x^{(i)} + W^{-1} b,
  \end{split}
\end{equation}
where the iteration matrix $S = I - W^{-1} C$ plays a critical role.  The method converges iff the spectral radius of $S$ is less than 1, and it is sufficient for convergence that $\| S \| < 1 $ in an induced matrix norm.
Examples of such iterations include the well known Jacobi iteration with
$W = \diag(C)$ or the Gauss-Seidel iteration with $W$ as the upper or lower-triangular part.

We note that several stages of the linear iteration can be combined
\begin{equation}
  x^{(i+\nu)} = S^\nu x^{(i)} + g^\nu,
\end{equation}
where $g^\nu = \sum_{j=0^{\nu-1}} S^{j} W^{-1} b$ is independent of the iterate $x^{(i)}$.

It is frequently the case for discrete PDE that such iterations converge quite slowly, and the rate deteriorates rapidly as the mesh is refined.  However, simple linear iterations play a critical role as \emph{smoothers} -- they eliminate high-frequency errors on the original mesh, and then the solution is corrected by solving an approximate problem on a coarser mesh.

The simplest way to describe and analyze such an approach is through a \emph{two-grid} method.
We suppose that $C$ discretizes a problem on $V_h$, and that we obtain some $C_H$ by discretizing the same problem over $V_H \subset V_h$, obtained on a coarser mesh.
One has a natural inclusion operator $\rho: V_H \rightarrow V_h$, and we associate with that a \emph{prolongation} matrix $P$ mapping vectors representing functions in $V_H$ to their representation as members of $V_h$.
Dual to this is a \emph{restriction} matrix $R$ somehow, frequently taken as the transpose of $P$.

Given a current iterate $x^{(i)}$ one obtains a two-grid method by first applying some $\nu$ steps of a smoothing iteration:
\begin{equation}
  \tilde{x}^{(i)} = S^\nu x^{(i)} + g^\nu.
\end{equation}
Then, one solves the coarse grid system
\begin{equation}
  C_H z = R \left( C \tilde{x}^{(i)} -b \right),
\end{equation}
which gives a kind of approximation to the error in $\tilde{x}^{(i)}$ on the coarse grid.  One then computes the next iterate of the two-grid method by prolonging this error approximation to the fine grid and updating the solution:
\begin{equation}
  x^{(i+1)} = x^{(i)} - P z.
\end{equation}
Combining these steps gives the two-grid iteration
\begin{equation}
  \label{eq:2grid}
  x^{(i+1)} = (I - P C_H^{-1} R A) S^\nu x^{(i)} + \tilde{g}
\end{equation}
for some suitably defined vector $\tilde{g}$ independent of $x^{(i)}$.

The two-grid iteration matrix
\begin{equation}
  T = (I - P C_H^{-1} R A) S^\nu
\end{equation}
defines the iteration and also the error propagation of the method.  The iteration~\eqref{eq:2grid} converges iff $\rho(T) < 1$.
As with simple preconditioned iteration, it is sufficient that $\| T \| < 1$ in some operator norm.

To apply this framework to obtain a two-grid method for the stage-coupled system~\eqref{eq:stagealgkron}, we first define restriction and prolongation operators for the multi-stage space:
\begin{equation}
  \label{eq:PR}
  \begin{split}
    \mathrm{P} & \equiv I \otimes P, \\
    \mathrm{R} & \equiv I \otimes R.
  \end{split}
\end{equation}
That is, we just prolong or restrict the degrees of freedom for each stage in the same way we would in a single-stage method.  We let $\mathrm{B}_h$ denote the orihinal system matrix obtained over $\mathbf{V}_h$, and $\mathrm{B}_H$ the system obtained on the coarse grid space $\mathbf{V}_H = \prod_{i=1}^s V_H$.

While restriction and prolongation operators  have a natural construction in terms of the finite element spaces, defining an appropriate smoother is more subtle.
We delve into this topic later, but for now, given some preconditioning matrix $\mathrm{W}$ and associated smoothing matrix
$\mathrm{S} = \mathrm{I} - \mathrm{W}^{-1} \mathrm{B}$,
one defines a two-grid method with iteration matrix
\begin{equation}
  \mathrm{T} = \left( \mathrm{I} - \mathrm{P} \mathrm{B}_H^{-1} \mathrm{R} \mathrm{B}_h \right) \mathrm{S}^\nu.
\end{equation}

\subsection{Analysis}
We begin by assuming that the Butcher matrix $A$ admits the eigenvalue decomposition
\begin{equation}
  \label{eq:Adiag}
  A = X \Lambda X^{-1},
\end{equation}
and we define the matrix
\begin{equation}
  \mathrm{X} = X \otimes I,
\end{equation}
which applies $X$ across the stages.  We also specify the identity matrix with which we take the Kronecker product of $X$ by
\begin{equation}
  \begin{split}
    \mathrm{X}_h & = X \otimes I_h, \\
    \mathrm{X}_H & = X \otimes I_H
  \end{split}
\end{equation}
when this is relevant to the context.

Critically, similarity transformations with $\mathrm{X}$ induce a block-diagonal structure for stage coupled systems, indicating a kind of decoupling into independent characteristic coordinates.  We give this idea a name:
\begin{definition}
  An $sN \times sN$ matrix $\mathrm{Y}$ is said to be \emph{monolithic} with respect to $\mathrm{X}$ if for $1 \leq i \leq s$ there exist $N \times N$ matrices $Y_i$ such that
  \begin{equation}
    \mathrm{X}^{-1} \mathrm{Y} \mathrm{X} = \diag_{1 \leq i \leq s} Y_i,
  \end{equation}
  and we just say that $\mathrm{Y}$ is \emph{monolithic} if the particular $\mathrm{X}$ is clear from context.
\end{definition}

\begin{remark}
  Equivalently, $\mathrm{Y}$ is monolithic with respect to $\mathrm{X}$ if it admits a decomposition of the form
  \begin{equation}
    \mathrm{Y} = \mathrm{X} \left( \diag_{1 \leq i \leq n} Y_i  \right) \mathrm{X}^{-1}.
  \end{equation}
\end{remark}

\begin{proposition}
  \label{prop:Bmon}
  The system matrix $\mathrm{B}_h$ is monolithic, as is $\mathrm{B}_H$.
\end{proposition}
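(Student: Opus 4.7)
The plan is to directly compute the similarity transformation $\mathrm{X}^{-1} \mathrm{B}_h \mathrm{X}$ and observe that it is block diagonal, with blocks given by the single-stage pencils $B_{h,z}$ evaluated at the eigenvalues of the Butcher matrix (scaled by $\Delta t$). The argument relies entirely on the Kronecker product mixed-product identity $(A \otimes B)(C \otimes D) = (AC) \otimes (BD)$, together with the eigendecomposition $A = X \Lambda X^{-1}$.

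More concretely, I would first substitute the definitions, writing
\begin{equation*}
\mathrm{X}^{-1} \mathrm{B}_h \mathrm{X}
= (X^{-1} \otimes I_h) \bigl( I \otimes \mathrm{M} + \Delta t\, A \otimes \mathrm{K} \bigr)(X \otimes I_h).
\end{equation*}
Then I would distribute and apply the mixed-product identity termwise to obtain
\begin{equation*}
\mathrm{X}^{-1} \mathrm{B}_h \mathrm{X}
= (X^{-1} X) \otimes \mathrm{M} + \Delta t\, (X^{-1} A X) \otimes \mathrm{K}
= I \otimes \mathrm{M} + \Delta t\, \Lambda \otimes \mathrm{K}.
\end{equation*}
Since $\Lambda = \diag_{1\leq i \leq s} \lambda_i$ is diagonal, the resulting matrix is block diagonal with $i$-th diagonal block
\begin{equation*}
\mathrm{M} + \Delta t\, \lambda_i\, \mathrm{K} = B_{h,\, \Delta t \lambda_i},
\end{equation*}
which is exactly the single-stage pencil introduced earlier, evaluated at the (possibly complex) scaled eigenvalue $\Delta t \lambda_i$. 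This exhibits $\mathrm{B}_h$ as monolithic with respect to $\mathrm{X}$ in the sense of the preceding definition, with characteristic blocks $Y_i = B_{h,\Delta t\lambda_i}$. The identical computation, carried out with $I_H$ in place of $I_h$ and the coarse-grid $\mathrm{M}_H$, $\mathrm{K}_H$ in place of their fine-grid counterparts, yields the analogous decomposition for $\mathrm{B}_H$.

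There is essentially no obstacle here: the whole statement is a direct consequence of how the Butcher matrix enters the stage-coupled system only through a Kronecker factor on the left, so diagonalizing $A$ diagonalizes that factor while leaving the spatial operators untouched. The mild point worth noting is that even when $A$ has complex eigenvalues (as is typical for fully implicit collocation schemes), the characteristic blocks $B_{h,\Delta t \lambda_i}$ are complex-valued single-stage operators; this is harmless at the level of the definition and in fact is the motivation for having allowed $z \in \mathbb{C}$ in the definition of $B_z$.
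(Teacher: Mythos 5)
Your proposal is correct and follows essentially the same route as the paper: both arguments substitute the eigendecomposition $A = X\Lambda X^{-1}$ and use the Kronecker mixed-product identity to exhibit the conjugated matrix as $I \otimes \mathrm{M} + \Delta t\,\Lambda \otimes \mathrm{K}$, i.e.\ block diagonal with blocks $B_{\lambda_i \Delta t}$. The only cosmetic difference is that the paper writes the factorization $\mathrm{B} = \mathrm{X}\bigl(\diag B_{\lambda_i\Delta t}\bigr)\mathrm{X}^{-1}$ directly rather than computing $\mathrm{X}^{-1}\mathrm{B}\mathrm{X}$, which the paper's own remark notes is an equivalent formulation of monolithicity.
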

\begin{proof}
  We just use the decomposition~\eqref{eq:Adiag}.  We omit the subscripts $h$ or $H$ so that
  $M$ and $K$ stand for $M_h$ or $M_H$ and $K_h$ or $K_H$, as needed -- the operations are the same in either case.
  \begin{equation}
    \label{eq:Bmon}    \begin{split}
      \mathrm{B} & = I \otimes M + \Delta t A \otimes K \\
      & = \left( X X^{-1} \right) \otimes M + \Delta t \left( X \Lambda X^{-1} \right) \otimes K \\
      & = \left( X \otimes I \right) \left[ I \otimes M + \Delta t \Lambda \otimes K \right] \left( X^{-1} \otimes I \right) \\
      & = \mathrm{X} \left[ \diag_{1\leq s} B_{\lambda_i \Delta t} \right] \mathrm{X}^{-1}.
    \end{split}
  \end{equation}
\end{proof}

Apparently, the multi-stage prolongation and restriction meet the definition of monolithicity as well, and furthermore:
\begin{lemma}
  \label{lem:PRX}
  The multi-stage prolongation and restriction operators given in~\eqref{eq:PR} satisfy
  \begin{equation}
    \label{eq:PRX}
    \begin{split}
      \mathrm{X}_h \mathrm{P} & = \mathrm{P} \mathrm{X}_H \\
      \mathrm{X}_h \mathrm{R} & = \mathrm{R} \mathrm{X}_H \\
    \end{split}
  \end{equation}
\end{lemma}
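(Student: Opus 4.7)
The plan is to reduce both identities in~\eqref{eq:PRX} to a single application of the mixed-product property of the Kronecker product, namely that $(A\otimes B)(C\otimes D) = (AC)\otimes(BD)$ whenever the products $AC$ and $BD$ are defined. Since the multi-stage operators $\mathrm{P}$, $\mathrm{R}$, $\mathrm{X}_h$, $\mathrm{X}_H$ are by definition built as Kronecker products with an $s\times s$ factor (either $I$ or $X$) and an $N\times N$ factor (either $P$, $R$, $I_h$, or $I_H$), the identities should fall out by computing both sides in the Kronecker product form and matching the two tensor slots.

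Concretely, for the prolongation identity I would expand
\begin{equation*}
\mathrm{X}_h \mathrm{P} = (X\otimes I_h)(I\otimes P) = (XI)\otimes(I_h P) = X\otimes P,
\end{equation*}
and
\begin{equation*}
\mathrm{P}\mathrm{X}_H = (I\otimes P)(X\otimes I_H) = (IX)\otimes(PI_H) = X\otimes P,
\end{equation*}
so that both sides agree. The restriction identity follows by the same computation with $R$ in place of $P$. The key observation is simply that $X$ commutes with the $s\times s$ identity in the ``stage'' slot, while $P$ (respectively $R$) is absorbed into the ``spatial'' slot against $I_h$ on one side and $I_H$ on the other; the dimensions line up precisely because $P$ maps $\mathbb{R}^{N_H}\to\mathbb{R}^{N_h}$, which is why one needs the two different identity factors $I_h$ and $I_H$ in the definition of $\mathrm{X}$.

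There is no real obstacle here: the statement is essentially the functoriality of $X\otimes(\cdot)$ with respect to inter-grid transfer. The only thing worth flagging is the conceptual content behind the calculation, namely that because the stage-space change of basis $X$ acts trivially on the spatial degrees of freedom, it commutes with any operator that acts trivially on the stage index, including the block-diagonal prolongation $\mathrm{P}$ and restriction $\mathrm{R}$. This is what allows the diagonalization of Proposition~\ref{prop:Bmon} to be carried consistently across the grid hierarchy, and is the reason we will subsequently be able to decouple the two-grid iteration into characteristic single-stage two-grid iterations.
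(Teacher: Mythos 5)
Your proposal is correct and matches the paper's approach: the paper's proof simply states that the identities follow from the definitions and properties of the Kronecker product, and your explicit computation via the mixed-product property $(A\otimes B)(C\otimes D)=(AC)\otimes(BD)$ is exactly the calculation being invoked. Nothing is missing.
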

\begin{proof}
  This follows immediately from the definitions and properties of the Kronecker product.
\end{proof}

Our analysis requires that the preconditioner $\mathrm{W}$ is monolithic as well, namely that it admits the same kind of characteristic decomposition into a block diagonal matrix under $\mathrm{X}$.  
In the following subsection, we will look more closely at this and how to arrive at such preconditioners.

\begin{proposition}
  If the preconditioning matrix $\mathrm{W}$ is monolithic, then so is the smoothing matrix $\mathrm{S} = \mathrm{I} - \mathrm{W}^{-1} \mathrm{B}_h$.
\end{proposition}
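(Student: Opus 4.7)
The plan is to show that conjugation by $\mathrm{X}$ simultaneously block-diagonalizes every factor appearing in $\mathrm{S}$, so that $\mathrm{S}$ itself is block-diagonalized. The core observation is that the class of monolithic matrices (with respect to a fixed $\mathrm{X}$) is closed under the operations that build $\mathrm{S}$ from $\mathrm{I}$, $\mathrm{W}$, and $\mathrm{B}_h$: inversion, products, and linear combinations. I would first note that the identity $\mathrm{I}$ is trivially monolithic, with $\mathrm{X}^{-1}\mathrm{I}\mathrm{X} = \diag_{1 \le i \le s} I$. Since $\mathrm{W}$ is monolithic by hypothesis, write $\mathrm{X}^{-1}\mathrm{W}\mathrm{X} = \diag_i W_i$, and observe that $\mathrm{W}^{-1}$ is then monolithic with blocks $W_i^{-1}$, because $\mathrm{X}^{-1}\mathrm{W}^{-1}\mathrm{X} = (\mathrm{X}^{-1}\mathrm{W}\mathrm{X})^{-1} = \diag_i W_i^{-1}$.

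Next I would invoke Proposition~\ref{prop:Bmon} to write $\mathrm{X}^{-1}\mathrm{B}_h\mathrm{X} = \diag_i B_{\lambda_i \Delta t}$. Inserting $\mathrm{X}\mathrm{X}^{-1}$ between $\mathrm{W}^{-1}$ and $\mathrm{B}_h$, the product satisfies
\begin{equation}
  \mathrm{X}^{-1} \mathrm{W}^{-1} \mathrm{B}_h \mathrm{X}
  = \left( \mathrm{X}^{-1} \mathrm{W}^{-1} \mathrm{X} \right)\left( \mathrm{X}^{-1} \mathrm{B}_h \mathrm{X} \right)
  = \diag_i \left( W_i^{-1} B_{\lambda_i \Delta t} \right),
\end{equation}
since the product of two block-diagonal matrices sharing the same block partition is block-diagonal on that partition.

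Subtracting from the identity then gives
\begin{equation}
  \mathrm{X}^{-1} \mathrm{S} \mathrm{X}
  = \mathrm{I} - \mathrm{X}^{-1}\mathrm{W}^{-1}\mathrm{B}_h \mathrm{X}
  = \diag_i \left( I - W_i^{-1} B_{\lambda_i \Delta t} \right),
\end{equation}
which exhibits $\mathrm{S}$ as monolithic with characteristic blocks $S_i = I - W_i^{-1} B_{\lambda_i \Delta t}$. These blocks moreover have a natural interpretation: $S_i$ is precisely the smoother induced by the preconditioner $W_i$ for the single-stage pencil $B_{\lambda_i \Delta t}$ associated with the $i$-th eigenvalue of the Butcher matrix. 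This identification will be useful downstream when the two-grid convergence of $\mathrm{T}$ is reduced to convergence of the characteristic single-stage iterations.

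There is no real obstacle: the statement is essentially that monolithicity is preserved under the ring operations used to build $\mathrm{S}$, and the whole proof is one application each of the inverse-of-block-diagonal and product-of-block-diagonal identities after conjugation by $\mathrm{X}$.
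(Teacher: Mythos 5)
Your proof is correct and follows essentially the same route as the paper: both use the monolithic decomposition of $\mathrm{W}$ together with Proposition~\ref{prop:Bmon} for $\mathrm{B}_h$, observe that conjugation by $\mathrm{X}$ turns the product $\mathrm{W}^{-1}\mathrm{B}_h$ into a block-diagonal product, and subtract from the identity to obtain the blocks $S_i = I - W_i^{-1} B_{h,\lambda_i \Delta t}$. The only difference is cosmetic bookkeeping (conjugating versus factoring out $\mathrm{X}$ and $\mathrm{X}^{-1}$), so there is nothing to add.
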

\begin{proof}
  We suppose that
  \begin{equation}
    \mathrm{W} = \mathrm{X} \left( \diag_{1 \leq i \leq s} W_i \right) \mathrm{X}^{-1}.
  \end{equation}
  Then, using~\eqref{eq:Bmon} in Proposition~\ref{prop:Bmon}, we have
  \begin{equation}
    \label{eq:Smono}
    \begin{split}
      \mathrm{S} & = \mathrm{I} - \mathrm{W}^{-1} \mathrm{B}_h \\
      & = \mathrm{I} - \mathrm{X} \left( \diag_{1 \leq i \leq s} W_i \right)^{-1}
      \left( \diag_{1\leq s} M + \Delta t \lambda_i K \right) \mathrm{X}^{-1} \\
      & = \mathrm{X} \left( \mathrm{I} - \diag_{1 \leq i \leq s} W_i^{-1} B_{h, \lambda_i \Delta t} \right) \mathrm{X}^{-1} \\
      & = \mathrm{X} \left[ \diag_{1 \leq i \leq s} \left( I - W_i^{-1} B_{h, \lambda_i \Delta t} \right) \right] \mathrm{X}^{-1} \\
      & = \mathrm{X} \left( \diag_{1 \leq i \leq s} S_i \right) \mathrm{X}^{-1},
      \end{split}
  \end{equation}
  where
  \begin{equation}
    S_i \equiv I - W_i^{-1} B_{h, \lambda_i \Delta t}.
  \end{equation}
\end{proof}
Apparently, a monolithic smoother applies some smoother to each characteristic stage of $\mathrm{B}_h$.

\begin{proposition}
  \label{prop:2grid}
  If the preconditioner $\mathrm{W}$ is monolithic, then so is
  the iteration matrix~\eqref{eq:2grid} for the stage-coupled two-grid method.
\end{proposition}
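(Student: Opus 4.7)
The plan is to expand $\mathrm{T} = (\mathrm{I} - \mathrm{P}\mathrm{B}_H^{-1}\mathrm{R}\mathrm{B}_h)\mathrm{S}^\nu$ factor by factor, putting each piece into its characteristic block-diagonal form under conjugation by $\mathrm{X}_h$, and then observing that the algebra closes. The hypothesis gives monolithicity of $\mathrm{W}$, and Proposition~\ref{prop:Bmon} together with the proposition preceding this one give monolithic decompositions of $\mathrm{B}_h$, $\mathrm{B}_H$, and $\mathrm{S}$ with respect to $\mathrm{X}_h$ (or $\mathrm{X}_H$ for the coarse operator). Since $\mathrm{S}$ is monolithic, so is $\mathrm{S}^\nu$ with diagonal blocks $S_i^\nu$, by inserting $\mathrm{X}_h\mathrm{X}_h^{-1}$ between successive copies. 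The only genuinely new ingredient needed is a compatibility calculation for the coarse-grid correction, where both $\mathrm{X}_h$ and $\mathrm{X}_H$ appear.

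The key computation would go as follows. Start from
\begin{equation}
\mathrm{P}\mathrm{B}_H^{-1}\mathrm{R}\mathrm{B}_h = \mathrm{P}\,\mathrm{X}_H\bigl[\diag_{i} B_{H,\lambda_i\Delta t}^{-1}\bigr]\mathrm{X}_H^{-1}\,\mathrm{R}\,\mathrm{X}_h\bigl[\diag_{i} B_{h,\lambda_i\Delta t}\bigr]\mathrm{X}_h^{-1}.
\end{equation}
Using Lemma~\ref{lem:PRX} to push $\mathrm{P}\mathrm{X}_H = \mathrm{X}_h\mathrm{P}$ on the left and the analogous identity $\mathrm{X}_H^{-1}\mathrm{R}\mathrm{X}_h = \mathrm{R}$ in the middle collapses the three inner similarity factors, leaving
\begin{equation}
\mathrm{P}\mathrm{B}_H^{-1}\mathrm{R}\mathrm{B}_h = \mathrm{X}_h\,\mathrm{P}\bigl[\diag_{i} B_{H,\lambda_i\Delta t}^{-1}\bigr]\mathrm{R}\bigl[\diag_{i} B_{h,\lambda_i\Delta t}\bigr]\mathrm{X}_h^{-1}.
\end{equation}
Because $\mathrm{P} = I\otimes P$ and $\mathrm{R} = I\otimes R$ are themselves block-diagonal in the stage index, they commute past a stage-wise diagonal matrix in the sense that $\mathrm{P}\diag_i(Z_i) = \diag_i(PZ_i)$ and similarly for $\mathrm{R}$, so the bracketed block collapses to a single stage-wise diagonal whose $i$-th block is $P B_{H,\lambda_i\Delta t}^{-1} R B_{h,\lambda_i\Delta t}$.

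Adding the trivially monolithic identity and multiplying by $\mathrm{S}^\nu$ (also monolithic with the same $\mathrm{X}_h$) then gives
\begin{equation}
\mathrm{T} = \mathrm{X}_h\Bigl[\diag_{1 \leq i \leq s}\bigl(I - P B_{H,\lambda_i\Delta t}^{-1} R B_{h,\lambda_i\Delta t}\bigr)S_i^\nu\Bigr]\mathrm{X}_h^{-1},
\end{equation}
i.e.\ $\mathrm{T}$ is monolithic and its $i$-th characteristic block is exactly the two-grid iteration matrix one would obtain by applying the analogous method to the single-stage pencil $B_{\lambda_i\Delta t}$. This last identification is the payoff of the proposition and sets up the convergence reduction to the characteristic single-stage problems.

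The main obstacle is purely bookkeeping: making sure the right flavor of $\mathrm{X}$ (fine vs.\ coarse) sits next to each factor before invoking Lemma~\ref{lem:PRX}, and verifying that $\mathrm{P}$ and $\mathrm{R}$ commute with stage-wise block diagonals in the sense used above. Both are immediate from the Kronecker structure $\mathrm{P} = I\otimes P$, $\mathrm{R} = I\otimes R$, so nothing deeper than careful tracking of similarity factors is required.
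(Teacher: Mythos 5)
Your proof is correct and takes essentially the same approach as the paper's: insert the monolithic decompositions of $\mathrm{B}_h$, $\mathrm{B}_H$, and $\mathrm{S}$, use Lemma~\ref{lem:PRX} to move $\mathrm{P}$ and $\mathrm{R}$ past the similarity factors, and collapse everything to a stage-wise block diagonal with blocks $T_i = \left(I - P B_{H,\lambda_i\Delta t}^{-1} R B_{h,\lambda_i\Delta t}\right) S_i^\nu$. If anything, your tracking of $\mathrm{X}_h$ versus $\mathrm{X}_H$ is more careful than the paper's, which drops the subscripts in its own display.
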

\begin{proof}
  We use~\eqref{eq:Bmon} and~\eqref{eq:PRX} to write
  \begin{equation}
    \begin{split}
      \mathrm{P} \mathrm{B}_H^{-1} \mathrm{R} \mathrm{B}_h 
      & =  \mathrm{P}
      \mathrm{X} \left[ \diag_{1\leq s} B_{H, \lambda_i \Delta t}^{-1} \right] \mathrm{X}^{-1} \mathrm{R} \mathrm{X} \left[ \diag_{1\leq s} B_{h, \lambda_i \Delta t} \right] \mathrm{X}^{-1} \\
      & =  \mathrm{X}
      \mathrm{P} \left[ \diag_{1\leq s} B_{H, \lambda_i \Delta t}^{-1} \right]  \mathrm{R} \mathrm{X}^{-1} \mathrm{X} \left[ \diag_{1\leq s} B_{h, \lambda_i \Delta t} \right] \mathrm{X}^{-1} \\
      & = \mathrm{X} \left[ \diag_{1\leq s} P B_{H, \lambda_i \Delta t}^{-1} R \right]
      \left[ \diag_{1\leq s} B_{h, \lambda_i \Delta t} \right] \mathrm{X}^{-1}  \\
      & = \mathrm{X} \left[ \diag_{1\leq s} P B_{H, \lambda_i \Delta t}^{-1} R B_{h, \lambda_i \Delta t} \right] \mathrm{X}^{-1},
    \end{split}
  \end{equation}
  and hence
  \begin{equation}
    \mathrm{I} - \mathrm{P} \mathrm{B}_H^{-1} \mathrm{R} \mathrm{B}_h
    = \mathrm{X} \left[ \diag_{1\leq s} \left( I - P B_{H, \lambda_i \Delta t}^{-1} R B_{h, \lambda_i \Delta t} \right) \right] \mathrm{X}^{-1}.
  \end{equation}
  Combining this with~\eqref{eq:Smono} gives
  \begin{equation}
    \label{eq:Tsim}
    \begin{split}
      \mathrm{T} & = 
      \mathrm{X} \left[ \diag_{1\leq s} \left( I - P B_{H, \lambda_i \Delta t}^{-1} R B_{h, \lambda_i \Delta t} \right) S_i^\nu \right] 
      \mathrm{X}^{-1} \\
      & = \mathrm{X} \left[ \diag_{1\leq s} T_i \right]
      \mathrm{X}^{-1},
    \end{split}
  \end{equation}
  where we define
  \begin{equation}
    T_i \equiv \left( I - P B_{H, \lambda_i \Delta t}^{-1} R B_{h, \lambda_i \Delta t} \right) S_i^\nu.
  \end{equation}
  That is, the similarity transformation decomposes the two-grid iteration into a two-grid iteration applied to each of the stages.
\end{proof}
Since similarity transformations preserve eigenvalues,
\begin{theorem}
  \label{thm:2gridspec}
  The spectral radius of the two-grid iteration satisfies
  \begin{equation}
    \rho(\mathrm{T}) = \max_{1 \leq i \leq s} \rho(T_i),
  \end{equation}
  and so the two-grid iteration converges iff the two-grid iteration for each stage converges. 
\end{theorem}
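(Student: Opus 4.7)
The plan is to leverage the block-diagonalization established in Proposition~\ref{prop:2grid}, which already delivers essentially everything we need: $\mathrm{T} = \mathrm{X} \left[ \diag_{1 \leq i \leq s} T_i \right] \mathrm{X}^{-1}$. Since $\mathrm{X}$ is nonsingular, this is a genuine similarity transformation, and similar matrices share their spectrum. Consequently, $\sigma(\mathrm{T}) = \sigma\bigl( \diag_i T_i \bigr)$, and I would then invoke the elementary fact that the spectrum of a block-diagonal matrix is the union of the spectra of its diagonal blocks, giving $\sigma(\mathrm{T}) = \bigcup_{i=1}^s \sigma(T_i)$. Taking the maximum modulus over this union yields the claimed identity $\rho(\mathrm{T}) = \max_{1 \leq i \leq s} \rho(T_i)$.

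The convergence statement then follows immediately from the standard criterion that a stationary linear iteration converges if and only if its iteration matrix has spectral radius strictly less than one: $\rho(\mathrm{T}) < 1$ is equivalent, via the just-established maximum identity, to $\rho(T_i) < 1$ for every $i$. There is no real obstacle here, since Proposition~\ref{prop:2grid} has absorbed all of the structural work; the only mild subtlety worth flagging is that the eigenvalues $\lambda_i$ of the Butcher matrix $A$, and hence $\mathrm{X}$ and the individual $T_i$, are in general complex, so each $T_i$ must be interpreted as acting on the complexification of the single-stage space. This does not affect the spectral-radius argument, but it is the reason the conclusion refers to two-grid iterations for the complex-shifted pencils $B_{h,\lambda_i \Delta t}$ and $B_{H,\lambda_i \Delta t}$ rather than for purely real single-stage problems.
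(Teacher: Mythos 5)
Your argument is correct and is essentially identical to the paper's own (very brief) justification: the paper likewise invokes the similarity $\mathrm{T} = \mathrm{X}\left(\diag_{1\le i\le s} T_i\right)\mathrm{X}^{-1}$ from Proposition~\ref{prop:2grid}, the fact that similar matrices share eigenvalues, and the block-diagonal spectrum-union fact. Your added remark about the $T_i$ acting on the complexified space is a sensible clarification but does not change the substance of the argument.
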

The theoretical convergence of these iterations with possibly complex time steps determines the convergence of the actual calculation we propose.
We note that algebraic multigrid methods for single-stage equations are known to work well in these settings~\cite{TBoonen_etal_2009a,maclachlan2008algebraic}.

\subsection{A closer look at smoothers}
Now, we consider smoothers, working towards a general approach for constructing monolithic smoothers.
First, we consider a negative example - point Jacobi.
The point Jacobi preconditioner for $\mathrm{B}_h$ is just its diagonal:
\begin{equation}
\begin{split}
  \mathrm{W} & = \diag \left( I \otimes M_h + \Delta t A \otimes K_h \right) \\
  & = I \otimes \diag(M_h) + \Delta t \diag(A) \otimes \diag(K_h)
  \end{split}
\end{equation}
Now, we check whether similarity transformation under $\mathrm{X}$ produces a decoupled system:
\begin{equation}
    \mathrm{X}^{-1} \mathrm{W} \mathrm{X} =
    I \otimes \diag(M_h) + \Delta t \left( X \diag(A) X^{-1} \right) \otimes \diag(K_h).
\end{equation}
Although $X A X^{-1}$ is diagonal, $X \diag(A) X^{-1}$ will typically not be, so Theorem~\ref{thm:2gridspec} does not apply to the point Jacobi method.

However, a pointwise but stage-coupled smoothing, such as considered in~\cite{vanlent2005} for finite difference spatial discretizations with multi-stage time stepping, is monolithic.  
Consider the preconditioner
\begin{equation}
  \label{eq:bpjac}
  \mathrm{W} = I \otimes \diag(M_h) + \Delta t A \otimes \diag(K_h).
\end{equation}
As written, $\mathrm{W}$ is an $s \times s$ block matrix, with each block an $N \times N$ diagonal matrix.  This is readily reshuffled so that the inverse may be applied by solving $N \times N$ separate dense $s \times s$ systems of the form
\begin{equation}
  m_{ii} I + \Delta t k_{ii} A,
\end{equation}
where $m_{ii}$ and $k_{ii}$ are the diagonal entries of $M_h$ and $K_h$.
\begin{proposition}
  The preconditioner~\eqref{eq:bpjac} is monolithic.
\end{proposition}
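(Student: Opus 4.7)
The plan is to verify the definition of monolithicity directly by computing $\mathrm{X}^{-1} \mathrm{W} \mathrm{X}$ and showing that the result is block diagonal across the $s$ stages. Unlike the point Jacobi case just discussed, the crucial structural difference is that the Butcher matrix $A$ itself (not $\diag(A)$) appears intact in the Kronecker factor $A \otimes \diag(K_h)$, so the similarity transformation by $\mathrm{X} = X \otimes I$ can exploit the eigendecomposition $A = X\Lambda X^{-1}$.

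First, I would apply the standard Kronecker product identity $(M_1 \otimes M_2)(N_1 \otimes N_2) = (M_1 N_1) \otimes (M_2 N_2)$ to each of the two summands in
\begin{equation*}
\mathrm{X}^{-1} \mathrm{W} \mathrm{X} = (X^{-1} \otimes I)\bigl( I \otimes \diag(M_h) + \Delta t\, A \otimes \diag(K_h) \bigr)(X \otimes I).
\end{equation*}
The first term reduces to $(X^{-1} I X) \otimes \diag(M_h) = I \otimes \diag(M_h)$, and the second term reduces to $\Delta t \,(X^{-1} A X) \otimes \diag(K_h) = \Delta t\, \Lambda \otimes \diag(K_h)$ by the assumed eigendecomposition.

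Next, I would observe that since both $I$ and $\Lambda$ are diagonal matrices of size $s \times s$, their Kronecker products with $\diag(M_h)$ and $\diag(K_h)$ are each block diagonal with $s$ blocks of size $N \times N$. Summing yields
\begin{equation*}
\mathrm{X}^{-1} \mathrm{W} \mathrm{X} = \diag_{1 \leq i \leq s} \bigl( \diag(M_h) + \Delta t\, \lambda_i \diag(K_h) \bigr),
\end{equation*}
which exhibits $\mathrm{W}$ as monolithic with characteristic blocks $W_i = \diag(M_h) + \Delta t\, \lambda_i \diag(K_h)$.

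There is essentially no obstacle here beyond careful bookkeeping with Kronecker products; the content of the statement is really the contrast with the point Jacobi preconditioner treated immediately before. I would therefore close by remarking that the pointwise blocks $W_i$ coincide, stage by stage, with natural (possibly complex-shifted) diagonal preconditioners of the single-stage pencil $B_{h, \lambda_i \Delta t}$, consistent with the interpretation given after~\eqref{eq:Smono}.
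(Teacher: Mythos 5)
Your proposal is correct and follows exactly the paper's own argument: conjugate by $\mathrm{X} = X \otimes I$, use the Kronecker product identity to reduce $X^{-1} A X$ to $\Lambda$, and read off the block-diagonal form $\diag_{1 \leq i \leq s}\left( \diag(M_h) + \Delta t\, \lambda_i \diag(K_h) \right)$. No gaps; the added remark about the contrast with point Jacobi matches the paper's surrounding discussion.
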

\begin{proof}
  We compute the similarity transformation with $\mathrm{X}$:
  \begin{equation}
    \begin{split}
      \mathrm{X}^{-1} \mathrm{W} \mathrm{X}
      & = \mathrm{X}^{-1} \left( I \otimes \diag(M_h) + \Delta A \otimes \diag(K_h) \right) \mathrm{X} \\
      & = I \otimes \diag(M_h) + \Delta t \Lambda \otimes \diag(K_h) \\
      & = \diag_{1 \leq i \leq s} \left( \diag(M_h) + \Delta t \lambda_i \diag(K_h) \right),
    \end{split}
  \end{equation}
\end{proof}
So, the stage-coupled pointwise Jacobi preconditioner applies point Jacobi to each characteristic stage -- if point Jacobi is a good preconditioner for $B_{h, \lambda_i \Delta t}$, then we expect the monolithic two-grid method to converge.

Point Jacobi is not always the right smoother for the single-stage operators $B_{h, z}$.  More generally, additive Schwarz preconditioners include point Jacobi methods as well as many others.
Subspace decompositions based on vertex patches are frequently observed to give conditioning estimates independent of the degree of the underlying spatial discretization, and this can be rigorously proven at least for symmetric coercive operators~\cite{Pavarino:1993, Schoeberl:2008}.
Moreover, vertex patch spaces are essential tool for convergent multigrid methods in $H(\mathrm{div})$ and $H(\mathrm{curl})$~\cite{arnold2000multigrid} and also can be adapted to fluid problems such as Navier-Stokes and magnetohydrodynamics~\cite{adler2016monolithic, molenaar1991two}.
Here, we show a very general result -- given an additive Schwarz preconditioner constructed over some decomposition $V_h$, then an analogous decomposition for the product space $\mathbf{V}_h$ gives a monolithic additive Schwarz preconditioner.
The implication is that if a particular additive Schwarz decomposition is known for a single-stage operator, we expect the monolithic version of it to perform very similarly in the multi-stage case (module any issues arising in the convergence for complex time steps).  

Now, we allow a rather general setting for additive Schwarz methods, returning to~\eqref{eq:Cxb} over some space finite element space $V_h$.
We decompose $V_h$ into a (not necessarily direct) sum of subspaces $V_i \subset V_h$ by
\begin{equation}
  \label{eq:asdecomp}
  V_h = \sum_{i=1}^{N_S} V_i,
\end{equation}
and we let $P_i$ and $R_i$ denote the matrix representations of appropriate prolongation and restriction operators between $V_i$ and $V_h$.
Then, we operator by specifying its inverse:
\begin{equation}
  W^{-1} = \sum_{i=1}^{N_S} P_i C^{-1} R_i,
\end{equation}
which amounts to solving $N_S$ problems restricted to (typically small) subspaces.

Now, suppose that we have some monolithic $\mathrm{C}$ posed over the coupled space $\mathbf{V}_h$, and suppose that we have an effective additive Schwarz preconditioner for the underlying single-stage method.
We define the subspaces
\begin{equation}
  \mathbf{V}_i = \prod_{j=1}^s V_i
\end{equation}
as the $s$-way product of each of the spaces in the original decomposition, which induces a decomposition 
of $\mathbf{V}_h$ by
\begin{equation}
  \label{eq:blockasdecomp}
  \mathbf{V}_h = \sum_{i=1}^{N_s} \mathbf{V}_i.
\end{equation}

We let
\begin{equation}
  \begin{split}
    \mathrm{P}_i & = I \otimes P_i, \\
    \mathrm{R}_i & = I \otimes R_i
  \end{split}
\end{equation}
be the prolongation and restriction operators between $\mathbf{V}_h$ and $\mathbf{V}_i$ obtained by operating on each component separately.

This decomposition of the prolongation/restriction operators defines the inverse of an operator by
\begin{equation}
  \label{eq:scschwarz}
  \mathrm{W}^{-1} = \sum_{i=1}^{N_S} \mathrm{P}_i \mathrm{C}^{-1} \mathrm{R}_i.
\end{equation}

We have a commuting relationship between the subspace prolongation/restriction operators and the matrix $\mathrm{X}$ exactly analogous to Lemma~\ref{lem:PRX}.

\begin{theorem}
  If $\mathrm{C}$ is monolithic, then so is its stage-coupled Schwarz preconditioner $\mathrm{W}$~\eqref{eq:scschwarz}.
\end{theorem}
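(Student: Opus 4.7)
The plan is to prove monolithicity of $\mathrm{W}$ by showing that $\mathrm{X}^{-1}\mathrm{W}^{-1}\mathrm{X}$ is block diagonal in the stages, since similarity transformation commutes with matrix inversion and preserves the block-diagonal property. So I would work throughout with $\mathrm{W}^{-1}$ as given in~\eqref{eq:scschwarz}, exploiting its sum-of-products structure.

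First I would push the similarity through each term of the sum. Letting $\mathrm{X}^{(i)} = X \otimes I_{V_i}$ denote the stage transformation on the subspace $\mathbf{V}_i$, the commutation statement made just before the theorem (the analog of Lemma~\ref{lem:PRX}) gives $\mathrm{X}^{-1}\mathrm{P}_i = \mathrm{P}_i (\mathrm{X}^{(i)})^{-1}$ and $\mathrm{R}_i \mathrm{X} = \mathrm{X}^{(i)} \mathrm{R}_i$, so that each summand transforms as
\[
\mathrm{X}^{-1}\bigl(\mathrm{P}_i \mathrm{C}_i^{-1} \mathrm{R}_i\bigr)\mathrm{X} = \mathrm{P}_i \bigl[(\mathrm{X}^{(i)})^{-1} \mathrm{C}_i^{-1} \mathrm{X}^{(i)}\bigr] \mathrm{R}_i,
\]
where $\mathrm{C}_i$ denotes the local subspace problem implicit in the notation $\mathrm{C}^{-1}$ of~\eqref{eq:scschwarz}.

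Next I would check that each local operator $\mathrm{C}_i$ inherits monolithicity from $\mathrm{C}$. Writing $\mathrm{C}_i = \mathrm{R}_i \mathrm{C} \mathrm{P}_i$ in the usual Galerkin sense and applying the same commuting relations, one obtains $(\mathrm{X}^{(i)})^{-1} \mathrm{C}_i \mathrm{X}^{(i)} = \mathrm{R}_i\bigl(\mathrm{X}^{-1}\mathrm{C}\mathrm{X}\bigr)\mathrm{P}_i$. Because $\mathrm{X}^{-1}\mathrm{C}\mathrm{X}$ is block diagonal by monolithicity of $\mathrm{C}$, and $\mathrm{R}_i = I \otimes R_i$ and $\mathrm{P}_i = I \otimes P_i$ are themselves block diagonal across stages, the product is block diagonal. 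Inversion then gives that $(\mathrm{X}^{(i)})^{-1} \mathrm{C}_i^{-1} \mathrm{X}^{(i)}$ is block diagonal as well, so the bracketed middle factor above has the right structure.

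Finally I would assemble: conjugating this block diagonal middle factor by the outer $\mathrm{P}_i$ and $\mathrm{R}_i$ (which are again $I$ tensored with something and hence preserve the stage block structure) shows every summand is block diagonal with respect to $\mathrm{X}$, and so is the sum. Therefore $\mathrm{W}^{-1}$ is monolithic, and hence so is $\mathrm{W}$. I do not expect a serious obstacle here; the real content is notational, namely keeping straight what $\mathrm{C}^{-1}$ in~\eqref{eq:scschwarz} means on each subspace and recognizing that the Cartesian-product choice $\mathbf{V}_i = \prod_j V_i$ is precisely what forces the inter-subspace operators to be Kronecker products of the form $I \otimes (\cdot)$, which is what lets the characteristic stage transformation $\mathrm{X}$ pass cleanly through the decomposition.
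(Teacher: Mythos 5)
Your proposal is correct and follows essentially the same route as the paper: conjugate $\mathrm{W}^{-1}$ by $\mathrm{X}$, push $\mathrm{X}$ through each summand using the Kronecker-product commutation relations $\mathrm{X}^{-1}\mathrm{P}_i = \mathrm{P}_i(\mathrm{X}^{(i)})^{-1}$ and $\mathrm{R}_i\mathrm{X} = \mathrm{X}^{(i)}\mathrm{R}_i$, observe that the transformed local solve is block diagonal across stages, and conclude that a sum of block-diagonal matrices is block diagonal. The only difference is that you explicitly verify that each local subspace operator $\mathrm{R}_i\mathrm{C}\mathrm{P}_i$ inherits monolithicity before inverting it, a step the paper's calculation glosses over by writing $\mathrm{C}^{-1}$ for the subspace solve; making this explicit is a small but genuine improvement in rigor, not a different argument.
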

\begin{proof}
  Let $\mathrm{C}$ be monolithic, with
  \begin{equation}
    \mathrm{C} = \mathrm{X} \left( \diag_{1\leq i \leq s} C_i \right) \mathrm{X}^{-1},
  \end{equation}
  and then
  \begin{equation}
    \mathrm{C}^{-1} = \mathrm{X} \left( \diag_{1\leq i \leq s} C_i^{-1} \right) \mathrm{X}^{-1}
  \end{equation}
  
  Then,
  \begin{equation}
    \begin{split}
      \mathrm{X}^{-1} \mathrm{W}^{-1} \mathrm{X}_h & =
      \mathrm{X}^{-1} \left[ \sum_{j=1}^{N_S} \mathrm{P}_j \mathrm{C}^{-1} \mathrm{R}_j \right] \mathrm{X} = \sum_{j=1}^{N_S}  \left[ \mathrm{X}^{-1} \mathrm{P}_j  \mathrm{C}^{-1} \mathrm{R}_j  \mathrm{X} \right] \\
      & = \sum_{j=1}^{N_S}  \left[  \mathrm{P}_j \mathrm{X}^{-1} \mathrm{C}^{-1} \mathrm{X} \mathrm{R}_j \right] = \sum_{i=j}^{N_S}  \left[ \mathrm{P}_j \left( \diag_{1 \leq i \leq s} C_i^{-1} \mathrm{X}\right) \mathrm{R}_j \right] \\
      & = \sum_{i=1}^{N_S} \diag_{1 \leq j \leq s} P_j C_i^{-1} R_j = \diag_{1 \leq j \leq s} \left( \sum_{j=1}^{N_s} P_j C_i^{-1} R_j \right),
    \end{split}
  \end{equation}
  and so $\mathrm{W}^{-1}$ and hence $\mathrm{W}$ are monolithic.
This calculation shows that the stage-coupled additive Schwarz method amounts to applying additive Schwarz to each characteristic stage.
\end{proof}

\subsection{On the spectrum the Butcher matrix}
Our theory shows that the monolithic two-grid method convergence is equivalent to multigrid convergences of stages under the characteristic decomposition.
Since, the time steps for these stages turn out to be the eigenvalues of the Butcher matrix times the original time step, the eigenvalues of $A$ are important.
Figure~\ref{fig:eigs} shows the eigenvalue distribution for the Butcher matrix of Gauss-Legendre and RadauIIA methods for various numbers of stages.
Although the eigenvalues are complex, they decrease in size as we increase the number of stages.

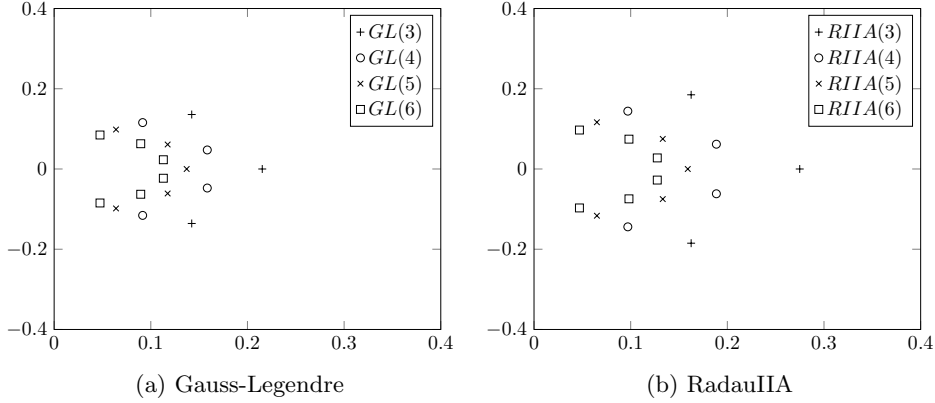
\begin{figure}
  \begin{center}
\begin{subfigure}[c]{0.49\textwidth}
  \begin{tikzpicture}[scale=0.75]
    \begin{axis}[xmin=0, xmax=.4, ymin=-0.4, ymax=0.4]
      \addplot[mark=+, only marks] table [x=Re, y=Im, col sep=comma] {GL3eigs.csv};
      \addlegendentry{$GL(3)$}
      \addplot[only marks, mark=o] table [x=Re, y=Im, col sep=comma] {GL4eigs.csv};
      \addlegendentry{$GL(4)$}
      \addplot[only marks, mark=x] table [x=Re, y=Im, col sep=comma] {GL5eigs.csv};
      \addlegendentry{$GL(5)$}
      \addplot[only marks, mark=square] table [x=Re, y=Im, col sep=comma] {GL6eigs.csv};
      \addlegendentry{$GL(6)$}
    \end{axis}
  \end{tikzpicture}
  \caption{Gauss-Legendre}
\end{subfigure}
\begin{subfigure}[c]{0.49\textwidth}
  \begin{tikzpicture}[scale=0.75]
    \begin{axis}[xmin=0, xmax=0.4, ymin=-0.4, ymax=0.4]
      \addplot[mark=+, only marks] table [x=Re, y=Im, col sep=comma] {RIIA3eigs.csv};
      \addlegendentry{$RIIA(3)$}
      \addplot[only marks, mark=o] table [x=Re, y=Im, col sep=comma] {RIIA4eigs.csv};
      \addlegendentry{$RIIA(4)$}
      \addplot[only marks, mark=x] table [x=Re, y=Im, col sep=comma] {RIIA5eigs.csv};
      \addlegendentry{$RIIA(5)$}
      \addplot[only marks, mark=square] table [x=Re, y=Im, col sep=comma] {RIIA6eigs.csv};
      \addlegendentry{$RIIA(6)$}                  
    \end{axis}
  \end{tikzpicture}
  \caption{RadauIIA}
\end{subfigure}
\end{center}
  \caption{Eigenvalues of the Butcher matrix $A$ for Gauss-Legendre and RadauIIA methods with various numbers of stages}
  \label{fig:eigs}
\end{figure}

While the behavior of the eigenvalues for increasing stage count is relatively benign, the eigenvalue matrix is less so.  A direct numerical calculation shows that the Butcher matrix is far from normal and hence, the eigenvector matrix cannot be unitary.
Moreover, its condition number grows exponential as the stage count is increased, as shown in Figure~\ref{fig:cond}.
This motivates our focus on eigenvalues rather than norms of iteration matrices for our two grid, and later, multigrid, methods.

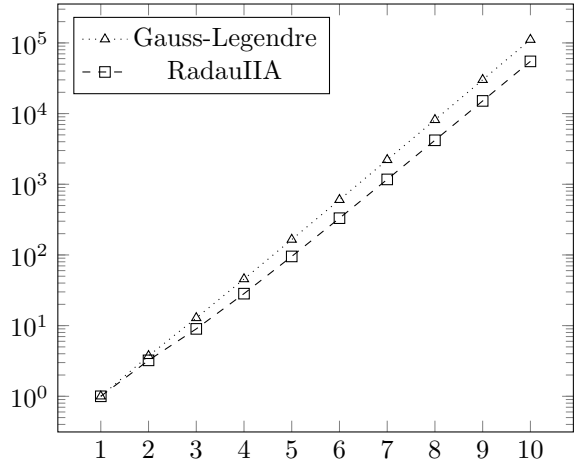
\begin{figure}
  \begin{center}
  \begin{tikzpicture}
    \begin{semilogyaxis}[xtick=data, log basis y={10}, legend pos=north west]
      \addplot[dotted, mark=triangle, mark options={solid}] table [x=k, y=kappa, col sep=comma] {GaussLegendrecond.csv};
      \addlegendentry{Gauss-Legendre};
      \addplot[dashed, mark=square, mark options={solid}] table [x=k, y=kappa, col sep=comma] {RadauIIAcond.csv};
      \addlegendentry{RadauIIA};      
    \end{semilogyaxis}
  \end{tikzpicture}
  \end{center}
  \caption{
    2-norm condition numbers of eigenvector matrix $X$ in the eigenvector decomposition $A = X \Lambda X^{-1}$ for the Butcher matrices of the $k$-stage RadauIIA and Gauss-Legendre methods. 
    }
\label{fig:cond}
\end{figure}

Consider equation~\eqref{eq:Tsim} showing the similarity of the monolithic two-grid iteration to the single-stage matrices resulting in Theorem~\ref{thm:2gridspec}.
Naively, taking norms of~\eqref{eq:Tsim} under the assumption that $\| T_i \| < 1$ for each characteristic stage results in a wildly pessimistic bound on $\| \mathrm{T} \|$ that grows exponentially with the stage count and does not imply convergence of the stage-coupled method.
It is known that the spectral radius \emph{eventually} controls the 2-norm convergence of an iterative process.
Theorem 1.2 of~\cite{olshanskii2014iterative} shows that, if $\rho(\mathrm{T}) < 1$, then, for large enough $i$, we have the 2-norm bound
\[
\| \mathrm{T}^i \|_2 \leq c_p i^{p-1} \rho(\mathrm{T})^{i-p+1},
\]
where $p$ is the size of the largest Jordan block in the decomposition $\mathrm{T} = \mathrm{Y} \mathrm{J} \mathrm{Y}^{-1} $.  The constant $c_p$ depends on the condition number of $\mathrm{Y}$, and we again have a pessimistic result.
While this may be unsatisfactory, we have observed no practical impacts on the behavior of monolithic multigrid methods.

\section{Monolithic multigrid}
\label{sec:mg}
Two-grid methods help to establish basic the theoretical structure, although in practice \emph{multigrid} methods approximate the coarse grid recursively, until some base coarse mesh is reached.  Frequently, one can use two-grid convergence to prove convergence of such multigrid methods~\cite{olshanskii2014iterative}.
Typically, these theorems require showing a kind of coarse grid approximation property and a certain norm estimate on the smoother.

Pursuing results of such flavor would introduce the condition number of the Butcher eigenvalue matrix.
Instead, we will continue in the spirit of our two-grid analysis, showing that a multigrid scheme with monolithic smoothing amounts to applying a multigrid to each characteristic stage.
Hence, convergence of the per-stage multigrid iterations (however that might be established) will imply convergence of the overall monolithic scheme, bypassing the conditioning of the characteristic decomposition.
The analysis is, despite technicalities, completely analogous to the two-stage case, and we summarize the approach here rather than giving a complete description of the process.

We pose a sequence of spaces $V_h^0 \subset V_h^1 \subset \dots \subset V_h^\ell$, and let $P_i$ and $R_i$ denote the matrices for prolongation and restriction between spaces $V_h^{i-1}$ and $V_h^i$.
For the stage-coupled system, we obtain the product spaces $\mathbf{V}_h^i$ and prolongation and restriction operators $\mathrm{P}_i = I \otimes P_i$ and $\mathrm{R}_i = I \otimes R_i$.  
We let $\mathrm{C}_i$ denote the monolithic operator on level $i$ and suppose that we have a monolithic preconditioner $\mathrm{W}_i$ for each $1 \leq i \leq \ell$.  Note that these prolongation and restriction operators are labeled with respect to the level of the multigrid hierarchy rather than to patches in an additive Schwarz decomposition.  The preconditioner $\mathrm{W}_i$ on each level may or may not itself be derived from an additive Schwarz-based method, but we are not using that structure at this point, only monolithicity.

These preconditioners then lead to smoothing operations
\begin{equation}
  \mathrm{S}_i = \mathrm{I} - \mathrm{W}_i^{-1} \mathrm{C}_i.  
\end{equation}

In the family of multigrid algorithms we consider, on each level, one applies some $\nu_1$ pre-smoothing iterations, and then restricts the residual to the next coarser mesh and recursively applies the multigrid method some $\gamma$ times.  Most frequently, one uses a so-called V-cycle with $\gamma=1$ or W-cycle with $\gamma=2$.  On the coarsest mesh, one exactly solves the system.  After recursion, the result is prolonged to the current mesh, and some $\nu_2$ post-smoothing iterations are performed.

Following~\cite{olshanskii2014iterative}, the iteration matrices for this family of multigrid algorithms satisfy (after some algebraic manipulation)
\begin{equation}
  \label{eq:mgT}
  \begin{split}
    \mathrm{T}_0 & = 0 \\
    \mathrm{T}_i & = \mathrm{S}_i^{\nu_1} \left( \mathrm{I} - \mathrm{P} \left( \mathrm{I} - \mathrm{T}_{i-1}^\gamma \right) \mathrm{C}_{i-1}^{-1} \mathrm{R}_i \mathrm{C}_i \right) \mathrm{S}_i^{\nu_2}.
  \end{split}
\end{equation}

Now, if we have a monolithic matrix defined on level $i$ of the multigrid hierarchy, we use a multiple subscripts to indicate the per-stage operators on each block, so that if $\mathrm{C}_i$ is a monolithic operator over space $\mathbf{V}^i_h$, then we have
\[
\mathrm{C}_i = \mathrm{X} \left( \diag_{j=1}^s C_{i,j} \right) \mathrm{X}^{-1},
\]
so the first subscript of $C_{i,j}$ refers to the multigrid hierarchy and the second to the block.  With this notation in hand, the following result is established inductively in the same way as done for the two-grid method:
\begin{proposition}
  If all the $\mathrm{W}_i$ are monolithic, then the multigrid method~\eqref{eq:mgT} is monolithic, satisfying
  \begin{equation}
    \mathrm{T}_i = \mathrm{X} \diag_{1 \leq j \leq s} \left[ S_j^{\nu_2} \left( I - P \left( I - T_{i-1, j}^\gamma \right) C_{i-1, j}^{-1} R C_{i, j} \right) S_j^{\nu_1} \right] \mathrm{X}^{-1}
  \end{equation}
\end{proposition}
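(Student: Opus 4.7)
The plan is to prove this by induction on the multigrid level $i$, imitating the structure used for the two-grid case in Proposition~\ref{prop:2grid}. The base case $i=0$ is trivial, since $\mathrm{T}_0 = 0$ is vacuously monolithic (take all per-stage blocks to be zero). The inductive step relies on the fact that monolithic matrices (with respect to a fixed $\mathrm{X}$) form an algebra closed under sums, products, scalar multiples, and inversion (when nonsingular), together with the intertwining relations for prolongation and restriction.

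For the inductive step, I would first assemble the ingredients known to be monolithic on level $i$: by Proposition~\ref{prop:Bmon}, $\mathrm{C}_i$ and $\mathrm{C}_{i-1}$ are monolithic with per-stage blocks $C_{i,j}$ and $C_{i-1,j}$; by the proposition preceding Proposition~\ref{prop:2grid}, $\mathrm{S}_i$ is monolithic with per-stage blocks $S_{i,j} = I - W_{i,j}^{-1} C_{i,j}$; and by the inductive hypothesis, $\mathrm{T}_{i-1}$ is monolithic with per-stage blocks $T_{i-1,j}$, so $\mathrm{T}_{i-1}^\gamma$ is also monolithic with blocks $T_{i-1,j}^\gamma$. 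The intergrid operators $\mathrm{P}_i, \mathrm{R}_i$ are not themselves monolithic in the strict sense (they map between different spaces), but by the analogue of Lemma~\ref{lem:PRX} between levels $i-1$ and $i$, they satisfy $\mathrm{X}_i \mathrm{P}_i = \mathrm{P}_i \mathrm{X}_{i-1}$ and $\mathrm{X}_i \mathrm{R}_i = \mathrm{R}_i \mathrm{X}_{i-1}$, which is exactly what is needed to pass them through the similarity transformation.

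With these pieces in hand, I would substitute into the recurrence~\eqref{eq:mgT}, conjugate by $\mathrm{X}_i^{-1}$, and push $\mathrm{X}^{\pm 1}$ inward through each factor using either the monolithic decomposition (for $\mathrm{S}_i$, $\mathrm{C}_i$, $\mathrm{C}_{i-1}^{-1}$, and $\mathrm{T}_{i-1}^\gamma$) or the intertwining relations (for $\mathrm{P}_i$ and $\mathrm{R}_i$). Each inserted $\mathrm{X}^{-1}\mathrm{X}$ or $\mathrm{X} \mathrm{X}^{-1}$ pair reduces the expression one factor at a time to a product of block-diagonal matrices. Since the product of block-diagonal matrices is block-diagonal with blocks equal to the products of the corresponding diagonal blocks, we read off the per-stage iteration matrix as exactly
\[
T_{i,j} = S_{i,j}^{\nu_1} \left( I - P_i \left( I - T_{i-1,j}^\gamma \right) C_{i-1,j}^{-1} R_i C_{i,j} \right) S_{i,j}^{\nu_2},
\]
which matches the claimed formula (modulo a harmless swap of the pre- and post-smoothing exponents to match the display in the statement).

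The main obstacle is bookkeeping rather than conceptual: one has to be careful that $\mathrm{X}$ carries the appropriate level-dependent identity factor when passing through $\mathrm{P}_i$ or $\mathrm{R}_i$, and that the inductive hypothesis is applied at level $i-1$ with the matching $\mathrm{X}_{i-1}$. Once the intertwining lemma is stated at the correct level, the telescoping of $\mathrm{X}^{-1}\mathrm{X}$ pairs is mechanical, and the block-diagonal structure is preserved throughout. No new analytic assumption is needed; the result is purely algebraic and follows from the same closure properties that drove the two-grid argument.
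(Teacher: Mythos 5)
Your proposal is correct and follows essentially the same route as the paper, which itself only asserts that the result ``is established inductively in the same way as done for the two-grid method'' using the monolithicity of $\mathrm{C}_i$, $\mathrm{S}_i$, the inductive hypothesis on $\mathrm{T}_{i-1}$, and the level-wise intertwining relations for $\mathrm{P}_i$ and $\mathrm{R}_i$. Your observation that the pre- and post-smoothing exponents are swapped between the recurrence~\eqref{eq:mgT} and the proposition's display is a genuine (harmless) inconsistency in the paper, and your bookkeeping of the level indices is, if anything, more careful than the paper's sketch.
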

Consequently,
\begin{theorem}
  The monolithic multigrid method converges if the underlying method converges for each characteristic stage.
\end{theorem}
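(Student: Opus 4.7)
The plan is to read the theorem off directly from the block-diagonal similarity established in the immediately preceding proposition, with no further multigrid algebra required. That proposition writes
\[
\mathrm{T}_\ell = \mathrm{X}\left(\diag_{1\le j\le s} T_{\ell,j}\right)\mathrm{X}^{-1},
\]
where each diagonal block $T_{\ell,j}$ is precisely the iteration matrix obtained by applying the same multigrid recursion---same cycle parameter $\gamma$, same pre- and post-smoothing counts $\nu_1,\nu_2$, and same intergrid transfers $P$ and $R$---to the single-stage pencil $B_{\cdot,\lambda_j\Delta t}$ across the hierarchy, using the per-stage smoother $S_j = I - W_j^{-1} B_{\cdot,\lambda_j\Delta t}$. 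This identification is the entire content of the preceding proposition and I would invoke it without reproof.

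Since similarity preserves eigenvalues, the spectrum of $\mathrm{T}_\ell$ is the union of the spectra of its diagonal blocks, and hence
\[
\rho(\mathrm{T}_\ell) = \max_{1\le j\le s}\rho(T_{\ell,j}).
\]
By hypothesis, the underlying multigrid converges for each characteristic stage, i.e.\ $\rho(T_{\ell,j}) < 1$ for $1\le j\le s$. The maximum of finitely many quantities each strictly less than one is itself strictly less than one, so $\rho(\mathrm{T}_\ell)<1$. This is the standard convergence criterion for the stationary iteration with iteration matrix $\mathrm{T}_\ell$, and the theorem follows.

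The real work has already been done in the preceding proposition; the only conceptual point worth flagging---and the reason the statement lives at the level of spectral radius rather than an induced operator norm---is that $\mathrm{X} = X \otimes I$ inherits the possibly enormous condition number of the Butcher eigenvector matrix $X$ documented in Figure~\ref{fig:cond}. A naive bound of the form $\|\mathrm{T}_\ell\|_2 \le \kappa_2(\mathrm{X}) \max_j \|T_{\ell,j}\|_2$ is exponentially pessimistic in $s$ and need not even imply convergence. Spectral radius, being similarity invariant, sidesteps this obstruction, at the expected cost of asymptotic rather than monotone convergence, which is controlled by the bound from~\cite{olshanskii2014iterative} recalled in the two-grid discussion. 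I therefore do not expect any genuine obstacle in the proof itself; the substance of the argument sits in the monolithicity that is preserved under the multigrid recursion, and that is exactly what the preceding proposition establishes.
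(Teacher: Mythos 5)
Your proof is correct and follows exactly the paper's route: the theorem is stated as an immediate consequence of the preceding proposition's block-diagonal similarity, with convergence read off from $\rho(\mathrm{T}_\ell) = \max_j \rho(T_{\ell,j})$ exactly as in the two-grid case (Theorem~\ref{thm:2gridspec}). Your remarks on why the spectral radius rather than a norm bound is the right invariant also match the paper's discussion of the conditioning of the Butcher eigenvector matrix.
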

Although multigrid methods can be used as iterations in their own right, they are also frequently used as preconditioners for a Krylov method such as GMRES.
If multigrid converges, then for a sufficient amount of smoothing its iteration matrix~\eqref{eq:mgT} has norm less of some $\xi$ less than $1$.  Using \cite[Theorem 1.33]{olshanskii2014iterative}, this controls the field of values and condition number of the preconditioned system and hence gives at least a linear convergence rate for GMRES.

\section{Applications}
\label{sec:apps}
Now, we consider a few model linear problems that, taken together, highlight the flexibility of our monolithic approach to smoothing.
In each case, we describe the underlying PDE, its finite element discretization, and a particular additive Schwarz smoother for the single-stage case.
Then, we test our multigrid multigrid method using the additive Schwarz smoother derived from the analogous decomposition for $\mathbf{V}_h^s$.

Our numerical results are obtained using the Irksome package~\cite{farrell2021irksome}, which provides Runge-Kutta methods on top of the Firedrake package~\cite{Rathgeber:2016}.  In each case, we report the iteration count and time required to solve the multistage linear system using eight M1 Max cores of a MacBook Pro with 64GB of RAM.

\subsection{Heat equation}
We pose the heat equation on the unit cube $\Omega = [0, 1]^3$:
\begin{equation}
  u_t - \Delta u = f, \\
\end{equation}
together with Dirichlet boundary conditions and some initial condition.
We generate a coarse mesh of $\Omega$ by dividing $\Omega$ into a $4 \times 4 \times 4$ array of cubes, then subdividing each cube into six tetrahedra in the standard way.
This can then be uniformly refined to create a multigrid hierarchy.

Spatial discretization by standard Galerkin finite elements of degree $r=1$ or $r=2$ leads to the variational evolution equation
\begin{equation}
  \left( u_{h, t} , v_h \right) + \left( \nabla u_h, \nabla v_h \right) = \left( f, v_h \right),
\end{equation}
together with appropriate initial conditions, and this can then be integrated in time with RadauIIA methods of various orders.

To demonstrate our monolithic multigrid technique, we fixed three levels of refinement for a total of 35,937 vertices.  For the single stage RadauIIA(1) (backward Euler) time-stepping scheme, we chose an additive Schwarz smoother based on a vertex patch decomposition~\cite{Schoeberl:2008}.
For each internal vertex $\mathbf{v}_i$ in the mesh, we let $\Omega^i$ be the closure of the star of $\mathbf{v}_i$ -- the set of all triangles of which $\mathbf{v}_i$ is a vertex.  Then, we take subspace $V_h^i$ to be set of all members of $V_h$ vanishing outside of $\Omega^i$.  By continuity, this enforces members of $V_h^i$ to vanish on $\partial \Omega^i$.
Typical patches for triangular meshes are shown in Figure~\ref{fig:heatpatches}; tetrahedral meshes are conceptually analogous but have many more cells per patch and are more difficult to visualize. 
When $V_h$ consists of $P^1$ functions, the patch subspaces have a single degree of freedom, and the additive Schwarz method reproduces point Jacobi smoothing.  When $V_h$ consists of $P^2$ functions, applying the patch smoother requires solving a small linear system for each internal vertex.

Now, such a decomposition of $V_h$ programmatically defines a decomposition of the stage-coupled space $\mathbf{V}_h^s$ -- one takes all of the stage degrees of freedom associated with the points in the patches shown.  For $P^1$ finite elements, this reproduces the pointwise block Jacobi smoother considered in~\cite{vanlent2005} for finite differences methods but gives a different method for $P^2$.
We implement the patch smoother using Firedrake's \lstinline{ASMStarPC} preconditioner.
This Python class extracts the degrees of freedom for each patch using PETSc's additive Schwarz framework, and allows us to use the \lstinline{tinyasm} package to solve all of the patch problems using optimized BLAS/LAPACK routines. 
On an $N \times N \times N$ mesh divided into tetrahedra, we let $h = \tfrac{1}{N}$ and set the time step of $\Delta t = \tfrac{\kappa}{h}$, with $\kappa = 1, 4, 8$.  Our results were very similar for each $\kappa$, we just report $\kappa=4$.

We solved the linear systems using PETSc's GMRES implementation to a relative Euclidean norm tolerance of $10^{-8}$.  The preconditioner was a multigrid V-cycle with two iterations of Chebyshev iteration with \lstinline{ASMStarPC} as a preconditioner on each level and coarse-grid problem solved directly via MUMPS.

\begin{figure}
  \begin{center}
    \begin{subfigure}[c]{0.45\textwidth}
      \centering
    \begin{tikzpicture}[scale=0.75]
    
    \draw[very thick] (0,0) -- (4,0) -- (4, 4) -- (0, 4) -- (0,0);
    \draw[very thick] (0,2) -- (4,2);
    \draw[very thick] (2,0) -- (2,4);
    \draw[very thick] (4,0) -- (0,4);
    \draw[very thick] (2,0) -- (0,2);
    \draw[very thick] (4,2) -- (2,4);
    \node[circle,draw,fill=gray!40,inner sep=0pt,minimum size=12pt] at (2,2) {};
    \tikzset{->-/.style={decoration={markings,mark=at position .6 with {\arrow[scale=1.7]{stealth}}},postaction={decorate}}}
    \end{tikzpicture}
    \label{p1}
    \caption{$P^1$ decomposition}
  \end{subfigure}
    \begin{subfigure}[c]{0.45\textwidth}
      \centering
    \begin{tikzpicture}[scale=0.75]
    
    \draw[very thick] (0,0) -- (4,0) -- (4, 4) -- (0, 4) -- (0,0);
    \draw[very thick] (0,2) -- (4,2);
    \draw[very thick] (2,0) -- (2,4);
    \draw[very thick] (4,0) -- (0,4);
    \draw[very thick] (2,0) -- (0,2);
    \draw[very thick] (4,2) -- (2,4);
    \node[circle,draw,fill=gray!40,inner sep=0pt,minimum size=12pt] at (2,1) {};
    \node[circle,draw,fill=gray!40,inner sep=0pt,minimum size=12pt] at (3,1) {};
    \node[circle,draw,fill=gray!40,inner sep=0pt,minimum size=12pt] at (1,2) {};
    \node[circle,draw,fill=gray!40,inner sep=0pt,minimum size=12pt] at (2,2) {};
    \node[circle,draw,fill=gray!40,inner sep=0pt,minimum size=12pt] at (3,2) {};
    \node[circle,draw,fill=gray!40,inner sep=0pt,minimum size=12pt] at (1,3) {};
    \node[circle,draw,fill=gray!40,inner sep=0pt,minimum size=12pt] at (2,3) {};    

    \end{tikzpicture}
    \label{p2}
    \caption{$P^2$ decomposition}
  \end{subfigure}
  \end{center}
  \caption{ASM decomposition for $P^1$ and $P^2$ discretizations on triangular meshes.}
  \label{fig:heatpatches}
\end{figure}
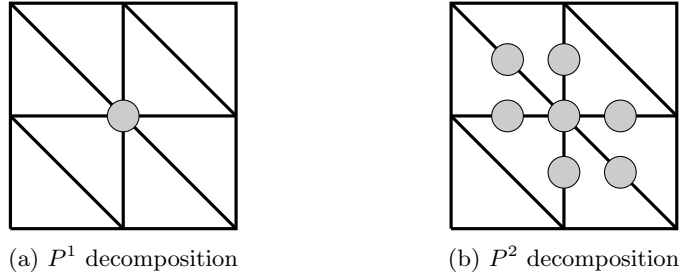

Figure~\ref{fig:heattime} reports the time taken to solve the linear system for one time step of a RadauIIA method with one through five stages, not counting the time to assemble the system matrix or preconditioner.  For linear elements, we observe almost no increase in run-time as we increase the number of stages.  For quadratic elements, the patches are considerably larger and so we notice more increase in run-time as the stages increase, but we observe that the five stage method is far less than five times as expensive as backward Euler.

\begin{figure}
  \begin{subfigure}[c]{0.48\textwidth}
    \begin{tikzpicture}[scale=0.7]
      \pgfplotstableread[col sep=comma,]{heat.deg1.cfl4.csv}\datatable
      \begin{axis}[ybar,ymin=0,xmax=5.4]
        \addplot table [x=stages, y=time]{\datatable};
      \end{axis}
    \end{tikzpicture}
    \caption{$P^1$}
  \end{subfigure}
  \begin{subfigure}[c]{0.48\textwidth}
    \begin{tikzpicture}[scale=0.7]
      \pgfplotstableread[col sep=comma,]{heat.deg2.cfl4.csv}\datatableq
      \begin{axis}[ybar,ymin=0,xtick=data,xmax=5.4]
        \addplot table [x=stages, y=time]{\datatableq};
      \end{axis}
    \end{tikzpicture}
    \caption{$P^2$}
  \end{subfigure}  
  \caption{Timing for solving the linear system for one time step of the heat equation using RadauIIA with $1 \leq s \leq 5$ stages.  With the $P^1$ discretization, solving the linear system with multigrid-preconditioned GMRES took 10 iterations for each RadauIIA method.  Only 8 iterations were required for the $P^2$ discretization.}
  \label{fig:heattime}
\end{figure}
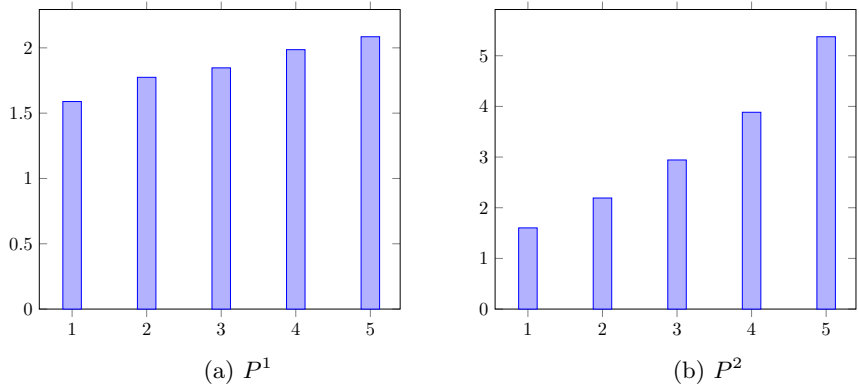

\subsection{Eddy current}
Next, we consider the diffusive eddy current problem on $H(\mathrm{curl})$.  
\begin{equation}
  E_t + \nabla \times (\nabla \times E) = 0,
\end{equation}
with natural boundary conditions.
We discretize this problem with first-kind \nedelec{} elements~\cite{nedelec1980mixed} of orders one and two on uniform refinements of the unit cube divided into a $4 \times 4 \times 4$ mesh partitioned into tetrahedra , giving rise to the variational problem
\begin{equation}
  \left(E_{h,t},v_h\right) +  \left(\nabla \times E_h, \nabla \times v_h\right) = 0,
\end{equation}
and we again integrate this problem in time with RadauIIA methods of various orders, using the same time steps as for the heat equation.
The resulting systems are larger than for the heat equation, and we consider only two levels of refinement, for a total of 4,913 vertices.

Algebraic multigrid methods were developed for multi-stage Runge-Kutta discretizations of this problem in~\cite{TBoonen_etal_2009a}.  These are based on algebraic multigrid for auxiliary space approach~\cite{hiptmair1998multigrid}, adapting the pointwise block smoothers from~\cite{vanlent2005}.
These methods used the underlying prolongation/restriction operators for the single-stage case to generate those for the multi-stage problem and used a block-type smoother.
Instead, we adopt the multigrid approach developed in~\cite{arnold2000multigrid}.  In this approach, one directly builds nested $H(\mathrm{curl})$ spaces on a hierarchy of meshes, prolonging and restricting in a natural way.  Even in the lowest-order case, a point Jacobi smoother fails, but an additive Schwarz smoother based on vertex patches is sufficient.
Figure~\ref{fig:nedpatch} shows an example patch for lowest-order edge elements on triangles.  A similar situation holds for tetrahedral edge elements, although the patch spaces for the second-order elements have much larger cardinality.  Each edge in the second-order space has two degrees of freedom, as does each face.  Supposing a tetrahedral patch associated with vertex $\mathbf{v}_i$ has 24 cells with 36 internal faces and 14 internal edges, the dimension of $V_h^i$ will be $2 \times 36 + 2 \times 14 = 100$.

\begin{figure}
  \centering
  \begin{tikzpicture}
    \tikzset{->-/.style={decoration={markings,mark=at position .6 with {\arrow[scale=1.7]{stealth}}},postaction={decorate}}}
    
    \draw[very thick] (10,0) -- (12,0);
    \draw[very thick] (12,0) -- (14,0);
    \draw[very thick,->-] (10,2) -- (12,2);
    \draw[very thick,->-] (12,2) -- (14,2);
    \draw[very thick] (10,4) -- (12,4);
    \draw[very thick] (12,4) -- (14,4);
    \draw[very thick] (10,0) -- (10,2);
    \draw[very thick] (10,2) -- (10,4);
    \draw[very thick,->-] (12,0) -- (12,2);
    \draw[very thick,->-] (12,2) -- (12,4);
    \draw[very thick] (14,0) -- (14,2);
    \draw[very thick] (14,2) -- (14,4);
    \draw[very thick] (12,0) -- (10,2);
    \draw[very thick,->-] (14,0) -- (12,2);
    \draw[very thick,->-] (12,2) -- (10,4);
    \draw[very thick] (14,2) -- (12,4);
  \end{tikzpicture}
  \caption{Patch for eddy current diffusion using lowest-order \nedelec{} elements on triangles.}
  \label{fig:nedpatch}
\end{figure}
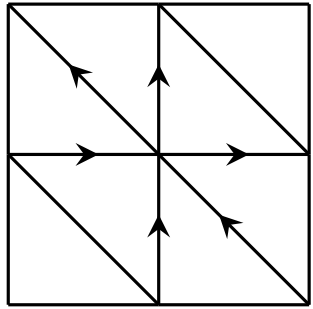

The right smoother in this case is based on vertex patches, and \lstinline{ASMStarPC} again does the right thin, and we can use a PETSc configuration nearly identical to the heat equation, except that we provided hand-tuned Chebyshev parameters to obtain some improvement the iteration counts.
As with the heat equation, Figure~\ref{fig:eddytime} reports the time taken to solve the linear system for one time step of a RadauIIA method with one through five stages, not counting the time to assemble the system matrix or preconditioner.  The patch smoother for the lowest-order method does not reduce to a pointwise block smoother as it does with $P^1$ for the heat equation, so we see somewhat more growth in run-time.  Still, the five-stage method required only about three times the run-time as the one-stage method.   This is roughly in line with the observations in~\cite{TBoonen_etal_2009a}.  For small matrices, the FLOP rate of BLAS implementations typically increases with the matrix size, and this largely explains the wildly superlinear behavior.

For larger matrices, the growth in the FLOP rate declines, and we begin to see this effect in the second-order discretization.
The patch problems are quite a bit larger (hundreds of unknowns), and we no longer have a $k$-stage method taking even less than $k$ times the run-time of backward Euler.  Still, for the accuracy obtained, we expect monolithic multigrid to make higher-order methods competitive.  
For example, the three-stage method takes a little more than five times the run-time of backward Euler.  We expect much better accuracy from one step of the formally fifth-order RadauIIA(3) than from five steps of backward Euler, so this should still lead to performance wins.

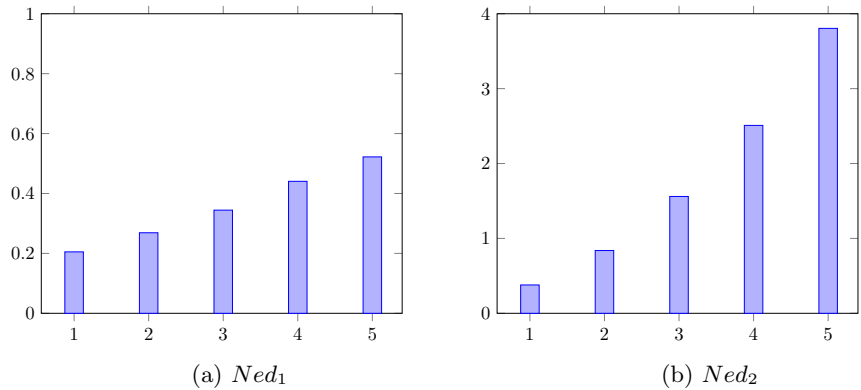
\begin{figure}
  \begin{subfigure}[c]{0.48\textwidth}
    \begin{tikzpicture}[scale=0.7]
      \pgfplotstableread[col sep=comma,]{eddy.deg1.cfl4.csv}\datatable
      \begin{axis}[ybar,ymin=0,ymax=1,xmax=5.4]
        \addplot table [x=stages, y=time]{\datatable};
      \end{axis}
    \end{tikzpicture}
    \caption{$Ned_1$}
  \end{subfigure}
  \begin{subfigure}[c]{0.48\textwidth}
    \begin{tikzpicture}[scale=0.7]
      \pgfplotstableread[col sep=comma,]{eddy.deg2.cfl4.csv}\datatableq
      \begin{axis}[ybar,ymin=0,ymax=4,xtick=data,xmax=5.4]
        \addplot table [x=stages, y=time]{\datatableq};
      \end{axis}
    \end{tikzpicture}
    \caption{$Ned_2$}
  \end{subfigure}  
  \caption{Timing for solving the linear system for one time step of the eddy current equation using RadauIIA with various stages.  With the lowest-order discretization, solving the linear system with multigrid-preconditioned GMRES took 10 iterations for each RadauIIA method.  Only 8 iterations were required for second-order discretization.}
  \label{fig:eddytime}
\end{figure}

\subsection{Stokes flow}
Our final example solves the time-dependent Stokes system for a fluid velocity $\mathbf{u}$ and pressure $p$
\begin{equation}
  \begin{split}
    \mathbf{u}_t - \Delta \mathbf{u} + \nabla p & = 0, \\
    \nabla \cdot \mathbf{u} & = 0.
  \end{split}
\end{equation}
We consider two-dimensional Stokes flow past a slightly off-center square obstacle, as shown in Figure~\ref{fig:stokesdomain}.  The domain consists of the rectangle $[0, 2.5] \times [0, 0.41]$ with the square $[0.15, 0.25] \times [0.15, 0.25]$ removed.  
No-flow boundary conditions are posed on the top and bottom edges and on the obstacle.  A horizontal parabolic profile is posed on the left edge, and natural boundary conditions are take on the right end.

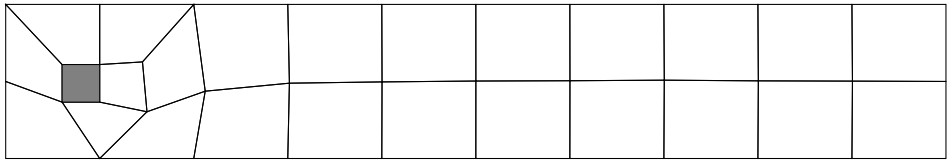
\begin{figure}
  \begin{tikzpicture}[scale=5.0]
\draw (0.0, 0.41) -- (0.0, 0.2050000000000006) -- (0.15, 0.15) -- (0.15, 0.25) -- cycle;
\draw (0.2500000000000022, 0.41) -- (0.25, 0.25) -- (0.15, 0.25) -- (0.0, 0.41) -- cycle;
\draw (0.0, 0.0) -- (0.0, 0.2050000000000006) -- (0.15, 0.15) -- (0.2499999999999995, 0.0) -- cycle;
\draw (0.3636984844536987, 0.2569990302222783) -- (0.3756511446533219, 0.1243488829171065) -- (0.25, 0.15) -- (0.25, 0.25) -- cycle;
\draw (0.5000000000000044, 0.41) -- (0.2500000000000022, 0.41) -- (0.25, 0.25) -- (0.3636984844536987, 0.2569990302222783) -- cycle;
\draw (0.2499999999999995, 0.0) -- (0.15, 0.15) -- (0.25, 0.15) -- (0.3756511446533219, 0.1243488829171065) -- cycle;
\draw (0.5000000000000044, 0.41) -- (0.3636984844536987, 0.2569990302222783) -- (0.3756511446533219, 0.1243488829171065) -- (0.5304984509564666, 0.179095479542571) -- cycle;
\draw (0.4999999999999989, 0.0) -- (0.2499999999999995, 0.0) -- (0.3756511446533219, 0.1243488829171065) -- (0.5304984509564666, 0.179095479542571) -- cycle;
\draw (0.5000000000000044, 0.41) -- (0.5304984509564666, 0.179095479542571) -- (0.754313380039602, 0.2004091668781849) -- (0.750000000000004, 0.41) -- cycle;
\draw (0.4999999999999989, 0.0) -- (0.5304984509564666, 0.179095479542571) -- (0.754313380039602, 0.2004091668781849) -- (0.7499999999999984, 0.0) -- cycle;
\draw (1.000000000000003, 0.41) -- (1.000642647303453, 0.2037960131723511) -- (0.754313380039602, 0.2004091668781849) -- (0.750000000000004, 0.41) -- cycle;
\draw (0.9999999999999978, 0.0) -- (0.7499999999999984, 0.0) -- (0.754313380039602, 0.2004091668781849) -- (1.000642647303453, 0.2037960131723511) -- cycle;
\draw (1.000000000000003, 0.41) -- (1.250000000000003, 0.41) -- (1.250115858936184, 0.2061154558189412) -- (1.000642647303453, 0.2037960131723511) -- cycle;
\draw (0.9999999999999978, 0.0) -- (1.249999999999997, 0.0) -- (1.250115858936184, 0.2061154558189412) -- (1.000642647303453, 0.2037960131723511) -- cycle;
\draw (1.500000000000002, 0.41) -- (1.250000000000003, 0.41) -- (1.250115858936184, 0.2061154558189412) -- (1.500258739161926, 0.2069030663463586) -- cycle;
\draw (1.499999999999997, 0.0) -- (1.500258739161926, 0.2069030663463586) -- (1.250115858936184, 0.2061154558189412) -- (1.249999999999997, 0.0) -- cycle;
\draw (1.500000000000002, 0.41) -- (1.750000000000002, 0.41) -- (1.751074964249576, 0.2084040981363664) -- (1.500258739161926, 0.2069030663463586) -- cycle;
\draw (1.499999999999997, 0.0) -- (1.500258739161926, 0.2069030663463586) -- (1.751074964249576, 0.2084040981363664) -- (1.749999999999996, 0.0) -- cycle;
\draw (2.000000000000001, 0.41) -- (2.001150491756403, 0.206608398473017) -- (1.751074964249576, 0.2084040981363664) -- (1.750000000000002, 0.41) -- cycle;
\draw (1.999999999999996, 0.0) -- (1.749999999999996, 0.0) -- (1.751074964249576, 0.2084040981363664) -- (2.001150491756403, 0.206608398473017) -- cycle;
\draw (2.000000000000001, 0.41) -- (2.001150491756403, 0.206608398473017) -- (2.251407945575682, 0.2059019540315802) -- (2.25, 0.41) -- cycle;
\draw (1.999999999999996, 0.0) -- (2.249999999999998, 0.0) -- (2.251407945575682, 0.2059019540315802) -- (2.001150491756403, 0.206608398473017) -- cycle;
\draw (2.25, 0.41) -- (2.251407945575682, 0.2059019540315802) -- (2.5, 0.2049999999999993) -- (2.5, 0.41) -- cycle;
\draw (2.249999999999998, 0.0) -- (2.5, 0.0) -- (2.5, 0.2049999999999993) -- (2.251407945575682, 0.2059019540315802) -- cycle;
\draw[fill=gray] (.15, .15) rectangle ++(.1, .1);
\end{tikzpicture}
  \caption{Coarse quadrilateral mesh of domain for Stokes flow around an obstacle.}
\label{fig:stokesdomain}
\end{figure}

Nothing in our framework restricts us to simplicial meshes; for this problem, we decompose our domain into quadrilateral meshes.  We discretize the velocity with continuous $Q^2$ elements (the images of reference biquadratic elements under a non-affine bilinear mapping) and pressure with discontinuous linear polynomials.

This discretization is used in~\cite{john2001higher}, where a cell-based Vanka-type smoother~\cite{vanka1986block} is developed for discontinuous pressure approximations.  A typical patch is shown in Figure~\ref{fig:vanka}, where we take all degrees of freedom attached to a given cell.  The continuity of velocities creates overlap between the subspaces, while pressure degrees of freedom only appear in a single patch space.

\begin{figure}
  \centering
  \begin{tikzpicture}[scale=0.75]
    \foreach \i in {0,2,4,6}
    {\draw[very thick] (\i,0) -- (\i,6);
      \draw[very thick] (0,\i) -- (6,\i);}
    \foreach \i in {2,3,4}
    \foreach \j in {2,3,4}
             {\node[circle,draw,fill=gray!40,inner sep=0pt,minimum size=12pt] at (\i,\j) {};}
             \node[circle,draw,fill=black,inner sep=0pt,minimum size=8pt] at (2.5,2.5) {};
             \node[circle,draw,fill=black,inner sep=0pt,minimum size=8pt] at (3.5,2.5) {};
             \node[circle,draw,fill=black,inner sep=0pt,minimum size=8pt] at (3,3.5) {};
  \end{tikzpicture}
  \caption{Vanka patch for Stokes discretization, with grey circles indicating velocity degrees of freedom and smaller black circles indicating pressures.}
  \label{fig:vanka}
\end{figure}
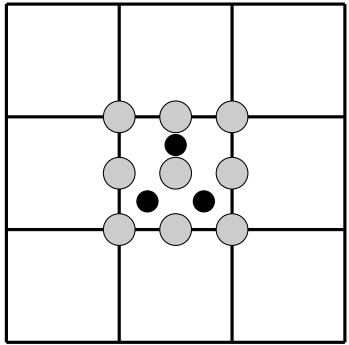

We constructed an initial coarse quadrilateral mesh of the domain using \lstinline{gmsh}~\cite{geuzaine2009gmsh}.
Our computational mesh of 25,024 vertices and 24,576 cells was constructed by uniform refinements of this initial mesh.
Then, we solved the linear systems arising from RadauIIA methods with one through five stages.  As before, we used GMRES with a relative tolerance of $10^{-8}$.  Multigrid V-cycles were used with a Vanka-type smoother on each level, and the coarse grid problem was solved with MUMPS.
The timings for solving a single linear system are reported in Figure~\ref{fig:stokestime}, where we see favorable performance for higher-order methods.  For example, the five-stage method required only about three times the run-time of backward Euler.

\begin{figure}
  \centering
  \begin{tikzpicture}[scale=0.7]
    \pgfplotstableread[col sep=comma,]{stokes.8procs.csv}\datatable
    \begin{axis}[ybar,ymin=0,xmax=5.4]
      \addplot table [x=stages, y=time]{\datatable};
    \end{axis}
  \end{tikzpicture}
  \caption{Timing for compute one time step of the Stokes equation with $Q^2$ velocity and discontinuous $P^1$ pressure using RadauIIA with various numbers of stages. RadauIIA(1) required 7 iterations, RadauII(2) required 6 iterations, and the rest of the cases only required 5.}
  \label{fig:stokestime}
\end{figure}
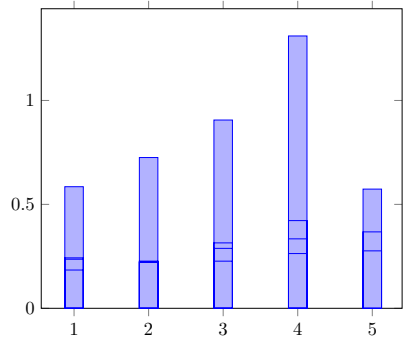

\section{Conclusions}
\label{sec:conc}
We have presented a general framework for developing and analyzing monolithic multigrid methods for stage-coupled systems arising in Runge-Kutta methods for finite element discretizations of time-dependent problems.
Given appropriate structure in the smoother, the resulting method converges iff the underlying method does for each characteristic stage.
This framework applies to many kinds of PDE.
Such theory, together with empirical results presented here and elsewhere in the literature, show that monolithic multigrid algorithms can be a powerful tool in realizing the full potential of fully implicit Runge-Kutta methods in practice.

At the same time, many avenues remain for future research.
First, the single-stage multigrid theory is often worked out for real-valued problems.
Despite experimental evidence, establishing such convergence in the context of complex-valued time steps seems to be open and requires techniques suitable for non-Hermitian problems.
Second, we have focused on conforming finite element discretization of linear problems, and extensions of the theory to other discretizations and to nonlinear problems is yet to be done.
Third, the additive Schwarz framework covers a wide range of possible monolithic smoothers.  We have not addressed multiplicative Schwarz or other more advanced kinds of smoothers but expect them to also prove monolithic.
Finally, stage-coupled smoothers lead to quite heavyweight local computations.  While these benefit from modern architectures, they may become expensive in the context of three-dimensional multiphysics applications.  Hence, practical work at accelerating these calculations may be necessary for fully large-scale calculations.

\section*{Acknowledgments}
The author thanks Prof.~Scott MacLachlan (Memorial University of Newfoundland) for many helpful discussions, especially regarding the use of the \lstinline{ASMStarPC} and \lstinline{ASMVankaPC} to obtain the numerical results.

\bibliographystyle{siamplain}
\bibliography{references}
\end{document}